\newtheorem{proposition}{Proposition}[section]
  \newtheorem{lemma}[proposition]{Lemma}
  \newtheorem{theorem}[proposition]{Theorem}
\theoremstyle{definition}
  \newtheorem{definition}[proposition]{Definition}
  \newtheorem*{definition*}{Definition}
  \newtheorem{example}[proposition]{Example}
  \newtheorem*{example*}{Example}
\numberwithin{equation}{section}
\def\id{\mathsf{id}}
\def\Aut{\mathsf{Aut}}
\def\can{\mathsf{can}}
\def\bc{\begin{center}}
\def\ec{\end{center}}
\begin{document}

\title{Hopf-Galois extensions for monoidal Hom-Hopf algebras}

\author{Yuanyuan Chen and Liangyun Zhang}
\address{College of Science, Nanjing Agricultural University, Nanjing
210095, P.R. China.}
\email{zlyun@njau.edu.cn}

\keywords{ Monoidal Hom-Hopf algebras; Hopf-Galois extensions; total integrals; affineness theorem }

\thanks{This work is supported by the National Natural Science
Foundation of China (10871170), the Educational Minister Science
Technology Key Foundation of China (108154), the College Special
Research Doctoral Disciplines Point Fund of China (20100097110040),
the Fundamental Research Funds for the Central Universities
(KYZ201125)
Corresponding author: Liangyun Zhang
}

\begin{abstract}
We investigate the theory of Hopf-Galois extensions for monoidal Hom-Hopf algebras.
As the main result of this paper, we prove the Schneider's affineness theorems in the case
of monoidal Hom-Hopf algebras in terms of the theory of the total integral and Hom-Hopf
Galois extensions. In addition, we obtain the affineness criterion for
relative Hom-Hopf module associated with faithfully flat Hom-Hopf Galois extensions.
\end{abstract}

\subjclass[2010]{16T05, 18D10, 18C15, 18C20}
\date{\today}
\maketitle

\section*{Introduction}

The study of nonassociative algebras was originally motivated by certain
problems in physics and other branches of mathematics. Hom-type algebras
appeared first in physical contexts, in connection with twisted, discretized
or deformed derivatives and corresponding generalizations, discretizations
and deformations of vector fields and differential calculus.
The notion of Hom-Lie algebras was introduced by Hartwig, Larsson, and Silvestrov
in \cite{D. Larsson2005,D. Larsson2007,J. T. Hartwig2006} as part
of a study of deformations of Witt algebras and Virasoro algebras. In a Hom-Lie algebra, the
Jacobi identity is twisted by a linear map, called the Hom-Jacobi identity
$$[\alpha(x),[y,z]]+[\alpha(y),[z,x]]+[\alpha(z),[x,y]]=0,$$
where $\alpha$ is a Lie algebra-endomorphism.
Because of close relation to discrete and deformed vector fields and differential calculus,
Hom-Lie algebras are widely studied recently, see
\cite{ A.Makhlouf2007,J. Arnlind,D. Yau2008,D. Yau20092,F. Ammar2010,
J. Arnlind2010,Q. Q. Jin2008, Y. Y. Chen2010,Y. Sheng}.

Hom-associative algebras play the role of associative algebras in the
Hom-Lie setting. They were introduced by Makhlouf and Silvestrov in \cite{A. Makhlouf2008}.
Hom-associative algebras and their related structures have recently become rather popular,
due to the prospect of having a general framework in which one can produce many
types of natural deformations of algebras. Among them such structures as
Hom-coassociative coalgebras, Hom-Hopf algebras, Hom-alternative algebras,
Hom-Jordan algebras, Hom-Poisson algebras, Hom-Leibniz algebras,
infinitesimal Hom-bialgebras, Hom-power associative algebras, quasi-triangular
Hom-bialgebras (see \cite{A. Gohr2010, A. Makhlouf20102,
A.Makhlouf2007,D. Yau20091,D. Yau20094,S. Caenepeel2009,Y. Y. Chen2013}), and so on.

Makhlouf and Silvestrov investigate Hom-associative algebras and
Hom-coassociative coalgebras further in \cite{A. Makhlouf20101,A.Makhlouf2007}.
Here the associativity of algebras, and the
coassociativity of coalgebras were twisted by endomorphisms. Hom-bialgebras
are both Hom-associative algebras and Hom-coassociative coalgebras such that
the comultiplication and counit are morphisms of algebras. These objects are
slightly different from the ones studied in this paper, see
Section 1.

The theory of Hopf Galois extensions which roots from the Galois theory for
groups acting on commutative rings, plays an important role in
the theory of Hopf algebras. There are two important applications of Hopf Galois
extensions: the Kreimer-Takeuchi type theorem and Schneider's affineness theorems.

The main purpose of this paper is to study the theory of Hopf-Galois extensions
for monoidal Hom-Hopf algebras. This paper is organized as follows.
In Section 2, relative Hom-Hopf modules are introduced.
In Section 3, we prove the affineness criterion for relative Hom-Hopf module
associated with faithfully flat Hom-Hopf Galois extensions.
In Section 4, we consider the Schneider's affineness theorems in the case
of monoidal Hom-Hopf algebras in terms of the theory of the total integral and Hom-Hopf
Galois extensions.

Throughout the paper, $k$ will be a fixed field and all vector
spaces, tensor products and homomorphisms are over $k$. We use
Sweedler's notation for coalgebras and comodules:
for a coalgebra $C$, we write its comultiplication
$\Delta(c)=c_{1}\otimes c_{2}$, for any $c\in C$; for a right
$C$-comodule $M$, we denote its coaction by  $\rho:m\mapsto
m_{(0)}\otimes m_{(1)}$, for any $m\in M$, in which we omit the
summation symbols for convenience. The category of $k$-modules will be
denoted by $\mathcal {M}_k$.

\section{Preliminaries}

Let $\mathcal {M}_k=(\mathcal {M}_k,\otimes,k,a,l,r)$ be the category of $k$-modules.
There is a new monoidal category $\mathcal{H}(\mathcal {M}_{k})$.
The objects of $\mathcal{H}(\mathcal {M}_{k})$ are couples $(M,\mu)$,
where $M\in \mathcal {M}_k$ and $\mu\in \Aut_k(M)$.
The morphisms of $\mathcal{H}(\mathcal {M}_{k})$ are morphisms $f:(M,\mu)\rightarrow (N,\nu)$
in $\mathcal {M}_k$ such that $\nu f=f\mu$.
For any objects $(M,\mu), (N,\nu)\in\mathcal{H}(\mathcal {M}_{k})$,
the monoidal structure is given by
$$(M,\mu)\otimes(N,\nu)=(M\otimes N,\mu\otimes\nu) \ \
\textrm{and}  \ \
(k,\id).$$

Briefly speaking, all Hom-structures are objects in the
monoidal category $\widetilde{\mathcal{H}}(\mathcal{M}_{k})=(\mathcal{H}(\mathcal {M}_{k}),$
$\otimes,(k,\id),\widetilde{a},\widetilde{l},\widetilde{r})$ introduced
in \cite{S. Caenepeel2009}, where the associator
$\widetilde{a}$ is given by the formula
\begin{eqnarray}
\widetilde{a}_{M,N,L}=a_{M,N,L}((\mu\otimes \id)\otimes \varsigma^{-1})
=(\mu\otimes (\id\otimes \varsigma^{-1})) a_{M,N,L}, \label{eq0}\end{eqnarray}
for any objects $(M,\mu),(N,\nu),(L,\varsigma)\in \mathcal{H}(\mathcal {M}_{k})$,
and the unitors $\widetilde{l}$ and $\widetilde{r}$ are
$$\widetilde{l}_M=\mu l_M=l_M(\id\otimes \mu); \ \ \widetilde{r}_M=\mu r_M=r_M(\mu \otimes \id).$$
The category $\widetilde{\mathcal{H}}(\mathcal{M}_{k})$ is called the
Hom-category associated to monoidal category $\mathcal {M}_k$,
where a $k$-submodule $N\subseteq M$ is called a subobject
of $(M,\mu)$ if $\mu$ restricts to an automorphism of $N$, that is,
$(N,\mu|_N)\in \widetilde{\mathcal{H}}(\mathcal{M}_{k})$.
Since the category $\mathcal {M}_{k}$ has left duality,
then so is the category $\widetilde{\mathcal{H}}(\mathcal{M}_{k})$.
Now let $M^*$ be the left dual of $M\in\mathcal {M}_{k}$,
and $b_M: k\rightarrow M\otimes M^*, d_M: M^*\otimes M\rightarrow k$
be the coevaluation and evaluation maps.
Then the left dual of $(M,\mu)\in\widetilde{\mathcal{H}}(\mathcal{M}_{k})$ is $(M^*,(\mu^*)^{-1})$,
and the coevaluation and evaluation maps are given by the formulas
$$ \widetilde{b}_M=(\mu\otimes\mu^*)^{-1} b_M; \ \ \widetilde{d}_M=d_M(\mu^*\otimes\mu).$$

In the following, we recall from \cite{S. Caenepeel2009} some information about Hom-structures.

\begin{definition}
A {\it unital monoidal Hom-associative algebra} is a vector space $A$
together with an element $1_{A}\in A$ and linear maps
$$m:A\otimes A \rightarrow A;a\otimes b\mapsto ab ,\hspace{1em} \alpha\in
Aut(A)$$  such that
\begin{eqnarray}
\alpha (a)(bc)=(ab)\alpha (c),\ \  a1_{A}=1_{A}a=\alpha (a), \label{eq1.2}
\end{eqnarray}
\begin{eqnarray}
\alpha (ab)=\alpha (a) \alpha (b), \ \ \alpha (1_{A})=1_{A}, \label{eq1.3}
\end{eqnarray}
for all $a,b,c \in A$.
\end{definition}

Note that the first part of $\eqref{eq1.2}$ can
be rewritten as $a(b\alpha^{-1}(c))=(\alpha^{-1}(a)b)c$.
In the language of Hopf algebras, $m$ is called {\it the
Hom-multiplication}, $\alpha$ is {\it the twisting automorphism} and
$1_{A}$ is {\it the unit}. Henceforth in this paper,
unless otherwise become necessary,
the terminology in Definition 1.1 will be slightly abused
for simplicity sake by making a convention to drop words unital and
Hom-associative. And we denote the monoidal Hom-algebra
by $(A,\alpha)$.

The definition of monoidal Hom-algebras is different from those defined in
\cite{A. Makhlouf20101,A.Makhlouf2007} in the following sense. The same twisted
associativity condition (\ref{eq1.2}) holds in both cases. However,
the unitality condition in their definition is the usual untwisted one:
$a1_A=1_Aa=a$, for any $a\in A$, and the twisting map
$\alpha$ does not need to be monoidal (that is, (\ref{eq1.3})
is not required).

Let $(A,\alpha)$ and $(A',\alpha')$ be two
monoidal Hom-algebras. A {\it Hom-algebra map}
$f:(A,\alpha)\rightarrow(A',\alpha')$ is a linear map such that
$f\alpha=\alpha' f, f(ab)=f(a)f(b)$ and $f(1_A)=1_{A'}$.

\begin{definition}
A {\it counital monoidal Hom-coassociative coalgebra} is an object $(C,\gamma)$
in the category $\widetilde{\mathcal{H}}(\mathcal {M}_{k})$ together
with linear maps $\Delta : C\rightarrow C\otimes C,
\Delta(c)=c_{1}\otimes c_{2}$ and $\varepsilon: C\rightarrow k$ such that
\begin{eqnarray}
\gamma^{-1}(c_{1})\otimes\Delta(c_{2})=\Delta(c_{1})\otimes
\gamma^{-1}(c_{2}), \ \ c_{1}\varepsilon(c_{2})=\gamma^{-1}(c)=\varepsilon(c_{1})c_{2}, \label{eq1.4}
\end{eqnarray}
\begin{eqnarray}
\Delta(\gamma(c))=\gamma(c_{1})\otimes\gamma(c_{2}), \ \ \varepsilon(\gamma(c))=\varepsilon(c), \label{eq1.5}
\end{eqnarray}
for all $c\in C.$
\end{definition}

Note that the first part of $\eqref{eq1.4}$ is equivalent to $c_{1}\otimes c_{21}\otimes
\gamma(c_{22})=\gamma(c_{11})\otimes c_{12}\otimes c_{2}$.
Analogue to monoidal Hom-algebras, monoidal Hom-coalgebras will be short for
counital monoidal Hom-coassociative coalgebras
without any confusion. The definition of monoidal Hom-coalgebra here
is somewhat different from the counital Hom-coassociative coalgebra
in \cite{A. Makhlouf20101,A.Makhlouf2007}.
Their coassociativity condition is twisted by some endomorphism, not necessarily by
the inverse of an automorphism, and the Hom-comultiplication is not comultiplicative.
The superiority of our definition is that these objects possess duality.
Then more results in Hopf algebras can be expanded to the monoidal-Hom case.

Let $(C,\gamma)$ and $(C',\gamma')$ be two monoidal Hom-coalgebras.
A {\it Hom-coalgebra map}
$f:(C,\gamma)\rightarrow(C',\gamma')$ is a linear map such that
$f\gamma=\gamma' f, \Delta f=(f\otimes f)\Delta$ and $\varepsilon f =\varepsilon$.

\begin{definition}
A {\it monoidal Hom-bialgebra}
$H=(H,\alpha,m,\eta,\Delta,\varepsilon)$ is a bialgebra in
the category $\widetilde{\mathcal{H}}(\mathcal{M}_{k})$. This
means that $(H,\alpha,m,\eta)$ is a monoidal Hom-algebra and
$(H,\alpha,\Delta,\varepsilon)$ is a monoidal Hom-coalgebra such that
$\vartriangle$ and $\varepsilon$ are Hom-algebra maps, that is, for
any $h, g\in H$,
$$\Delta(hg)=\Delta(h)\Delta(g),\ \Delta(1_{H})=1_{H}\otimes 1_{H}, $$
$$\varepsilon(hg)=\varepsilon(h)\varepsilon(g),\
\varepsilon(1_{H})=1_k.
$$
\end{definition}

For any bialgebra $(H,m,\eta,\Delta,\varepsilon)$, and any bialgebra endomorphism
$\alpha$ of $H$, the authors in \cite{A. Makhlouf20101} obtain that
$(H,\alpha,\alpha m,\eta,\Delta\alpha,\varepsilon)$ is a Hom-bialgebra in their terms.
In our case, there is a monoidal Hom-bialgebra $(H,\alpha,\alpha
m,\eta,\Delta\alpha^{-1},\varepsilon)$, provided that
$\alpha:H\rightarrow H$ is a bialgebra automorphism.

\begin{definition}
A monoidal Hom-bialgebra$(H,\alpha)$ is called a
{\it monoidal Hom-Hopf algebra} if there exists a morphism (called
antipode) $S : H\rightarrow H$ in $\widetilde{\mathcal{H}}(\mathcal{M}_{k})$
(i.e. $S\alpha=\alpha S$), such that for any $h\in H$,
\begin{eqnarray}
S(h_1)h_2=\varepsilon(h)1_H=h_1S(h_2). \label{eq1.6}
\end{eqnarray}
\end{definition}

In fact, a monoidal Hom-Hopf algebra is a Hopf algebra in the category
$\widetilde{\mathcal{H}}(\mathcal{M}_{k})$. Further, the antipodes of
monoidal Hom-Hopf algebras have similar properties of those of Hopf
algebras such as they are morphisms of Hom-anti-(co)algebras. Since $\alpha$
is bijective and commutes with antipode $S$, thus $S\alpha^{-1}=\alpha^{-1} S$.
For a finite-dimensional monoidal Hom-Hopf algebra $(H,\alpha,m,\eta,\Delta,\varepsilon,S)$,
the dual $(H^*,(\alpha^*)^{-1})$ is also a monoidal Hom-Hopf algebra with structures:
for all $g,h\in H,h^*,g^*\in H^*$,
$$<h^*g^*,h>=<h^*,h_1><g^*,h_2>, \ \ 1_{H^*}=\varepsilon;$$
$$<\Delta(h^*),g\otimes h>=<h^*,gh>, \ \ \varepsilon_{H^{*}}=\eta;$$
$$(\alpha^*)^{-1}(h^*)=h^*\alpha^{-1},  \ \ S^*(h^*)=h^* S^{-1}.$$

Then we recall the actions and  coactions over monoidal Hom-algebras and
monoidal Hom-coalgebras respectively.

\begin{definition}
Let $(A,\alpha)$ be a monoidal Hom-algebra. A {\it right $(A,\alpha)$-Hom-module}
consists of $(M,\mu)$ in $\widetilde{\mathcal{H}}(\mathcal{M}_{k})$
together with a morphism $\psi: M\otimes A\rightarrow M,
\psi(m\otimes a)=m\cdot a$ such that
$$(m\cdot a)\cdot\alpha(b)=\mu(m)\cdot(ab), \ \ m\cdot1_{A}=\mu(m),$$
$$\mu(m\cdot a)=\mu(m)\cdot\alpha(a),$$
for all $a,b\in A$ and $m\in M$.
\end{definition}

Similarly, we can define left $(A,\alpha)$-Hom-modules.
Monoidal Hom-algebra $(A,\alpha)$ can be considered
as a Hom-module on itself by the Hom-multiplication. Let
$(M,\mu),(N,\nu)$ be two left $(A,\alpha)$-Hom-modules. A morphism
$f:M\rightarrow N$ is called {\it left $(A,\alpha)$-linear}
(or {\it left $(A,\alpha)$-Hom-module map}) if $f(a\cdot m)=a\cdot
f(m),$ for any $a\in A, m\in M$, and $f\mu=\nu f$.
We denote the category of left $(A,\alpha)$-Hom-modules by
$\widetilde{\mathcal {H}}(_A\mathcal {M})$. If
$(M,\mu),(N,\nu)\in\widetilde{\mathcal {H}}(_H\mathcal {M})$, then
$(M\otimes N,\mu\otimes\nu)\in\widetilde{\mathcal {H}}(_H\mathcal
{M})$ via the left $(H,\alpha)$-action
\begin{eqnarray}
h\cdot(m\otimes n)=h_1\cdot m\otimes h_2\cdot n,\label{eq1.7}
\end{eqnarray}
where $(H,\alpha)$ is a monoidal Hom-bialgebra.

\begin{definition}
Let $(C,\gamma)$ be a monoidal Hom-coalgebra.
A {\it right $(C,\gamma)$-Hom-comodule} is an object
$(M,\mu)$ in $\widetilde{\mathcal{H}}(\mathcal{M}_{k})$ together with a
$k$-linear map $\rho_{M}:M\rightarrow M\otimes C,$
$\rho_{M}(m)=m_{(0)}\otimes m_{(1)}$ such that
$$
\mu^{-1}(m_{(0)})\otimes\Delta_{C}(m_{(1)})=m_{(0)(0)}\otimes(m_{(0)(1)}\otimes\gamma^{-1}(m_{(1)})),
\ \ \ m_{(0)}\varepsilon(m_{(1)})=\mu^{-1}(m),$$
$$\rho_{M}(\mu(m))=\mu(m_{(0)})\otimes\gamma(m_{(1)}),$$
for all $m\in M.$
\end{definition}

$(C,\gamma)$ is a Hom-comodule on itself via the
Hom-comultiplication. Let $(M,\mu),(N,\nu)$ be two right
$(C,\gamma)$-Hom-comodules. A morphism $g:M\rightarrow N$ is
called {\it right $(C,\gamma)$-colinear} (or {\it right
$(C,\gamma)$-Hom-comodule map}) if $g\mu=\nu g$ and $g(m_{(0)})\otimes
m_{(1)}=g(m)_{(0)}\otimes g(m)_{(1)}$, for any $m\in M$.
The category of right $(C,\gamma)$-Hom-comodules is denoted
by $\widetilde{\mathcal {H}}(\mathcal {M}^C)$. And we'll also denote
the set of morphisms in $\widetilde{\mathcal {H}}(\mathcal {M}^H)$ from $M$
to $N$ by $\widetilde{\mathcal {H}}(Com_H(M,N))$.
If $(M,\mu),(N,\nu)\in\widetilde{\mathcal{H}}(\mathcal {M}^H)$, then
$(M\otimes N,\mu\otimes\nu)\in\widetilde{\mathcal{H}}(\mathcal
{M}^H)$ with the Hom-comodule structure
\begin{eqnarray}
\rho(m\otimes n)=m_{(0)}\otimes n_{(0)}\otimes m_{(1)}n_{(1)}. \label{eq1.8}
\end{eqnarray}

In the following, we introduce the invariants and coinvariants on
Hom-modules and Hom-comodules respectively.

\begin{definition} Let $(H,\alpha)$ be a monoidal Hom-Hopf algebra.

(1) If $(M,\mu)$ is a left $(H,\alpha)$-Hom-module. The {\it invariant
of $(H,\alpha)$ on $(M,\mu)$} is the set
$$M^H=\{m\in M|h\cdot m=\varepsilon(h)\mu(m)\}.$$

(2) If $(N,\nu)$ is a right $(H,\alpha)$-Hom-comodule with the
comodule structure $\rho$. The {\it coinvariant of $(H,\alpha)$ on
$(N,\nu)$} is the set
$$N^{coH}=\{n\in N|\rho(n)=\nu^{-1}(n)\otimes 1_H\}.$$
\end{definition}

If $H$ is finite-dimensional, a right $(H,\alpha)$-Hom-comodule $(N,\nu)$ can
be considered as a left $(H^*,(\alpha^*)^{-1})$-Hom-module with the action
$h^*\cdot n=<h^*,n_{(1)}>\nu^2(n_{(0)})$, then we have
\begin{eqnarray}N^{coH}=\{n\in N|\rho(n)=\nu^{-1}(n)\otimes1_H\}
=\{n\in N|h^*\cdot n=<h^*,1_H>\nu(n)\}=N^{H^*}.\label{eq1.9}
\end{eqnarray}

\begin{definition} Let $(H,\alpha)$ be a monoidal Hom-Hopf algebra.
A {\it right $(H,\alpha)$-Hom-Hopf module}
$(M,\mu)$ is defined as a right $(H,\alpha)$-Hom-module and a
right $(H,\alpha)$-Hom-comodule as well, obeying the following
compatibility condition:
\begin{eqnarray}
\rho(m\cdot h)=m_{(0)}\cdot h_1\otimes m_{(1)}h_2,\label{eq1.10}
\end{eqnarray}
where $m\in M, h\in H$.
\end{definition}

Morphisms of right $(H,\alpha)$-Hom-Hopf modules are morphisms of both
right $(H,\alpha)$-linear and right $(H,\alpha)$-colinear.
We denote the category of right $(H,\alpha)$-Hom-Hopf modules
by $\mathcal {\widetilde{H}}(\mathcal{M}^H_H)$.

If $(M,\mu)$ is a right $(H,\alpha)$-Hom-Hopf module,
then so is $(M^{coH}\otimes H,\mu|_{M^{coH}}\otimes \alpha)$,
with the following action and coaction:
$$(m\otimes h)\cdot g=\mu(m)\otimes hg,$$
$$\rho(m\otimes h)=(\mu^{-1}(m)\otimes h_1)\otimes h_2,$$
where $m\in M, h,g\in H$.

\section{Relative Hom-Hopf modules}

In this section, we study relative Hom-Hopf modules and the adjoint functors
in terms to the category of relative Hom-Hopf modules.

\begin{definition}
Let $(H,\alpha)$ be a monoidal Hom-Hopf algebra.
A {\it right $(H,\alpha)$-Hom-comodule algebra} is a monoidal Hom-algebra
and a right $(H,\alpha)$-Hom-comodule $(A,\beta)$ with the coaction
$\rho_A: A\rightarrow A\otimes H$ such that $\rho_A$ is a morphism
of Hom-algebras, that is, for any $a,b\in A$,
\begin{eqnarray}\rho_A(ab)=\rho_A(a)\rho_A(b),   \label{eq2.1.1} \end{eqnarray}
$$ \rho_A(1_A)=1_A\otimes 1_H, $$
$$\rho_A\beta=(\beta\otimes\alpha)\rho_A.$$
\end{definition}

We always acquiesce $(A,\beta)$ being a right $(H,\alpha)$-Hom-comodule algebra.

Let $(H,m_H,\eta,\Delta,\varepsilon,S)$ be a Hopf algebra and $(A,m_A,\rho)$ a right $H$-comodule algebra.
If $\alpha:H\rightarrow H$ is a Hopf algebra automorphism, then there is a monoidal Hom-Hopf algebra
$H_\alpha=(H,m_\alpha=\alpha m_H,\eta,\Delta_\alpha=\Delta\alpha^{-1},\varepsilon,S,\alpha)$
by Proposition 1.14 in \cite{S. Caenepeel2009}. Let $\beta \in Aut(A)$ be an algebra automorphism such that
$\rho\beta=(\beta\otimes\alpha)\rho$, then it is easy to show that
$A_\beta=(A,m_\beta=\beta m_A,\rho_\beta=\rho\beta^{-1},\beta)$ is a
right $(H_\alpha,\alpha)$-Hom-comodule algebra by direct computation.
And the compatibility condition $\eqref{eq2.1.1}$ for $\rho_\beta$ and $m_\beta$ is just followed by
the compatibility $\rho(ab)=\rho(a)\rho(b)$ of comodule algebra $(A,m_A,\rho)$.

\begin{definition}
Let $(A,\beta,\rho_A)$ be a right
$(H,\alpha)$-Hom-comodule algebra. $(M,\mu)$ is called {\it a right $(H,A)$-Hom-Hopf
module} if $(M,\mu)$ is both in  $\mathcal{\widetilde{H}}(\mathcal {M}_A)$
and $\mathcal {\widetilde{H}}(\mathcal{M}^H)$
such that the following diagram commute:
$$\xymatrix{
M\otimes A \ar[r]^{\psi_M}\ar[d]_{\rho_M\otimes \rho_A} & M\ar[r]^{\rho_M}  & M\otimes H \\
(M\otimes H)\otimes(A\otimes H)\ar[d]_{\widetilde{a}}  &&  (M\otimes A)\otimes (H\otimes H)\ar[u]_{\psi_M\otimes m_H} \\
M\otimes(H\otimes(A\otimes H))\ar[d]_{\id\otimes\widetilde{a}^{-1}} &&  M\otimes (A\otimes (H\otimes H))\ar[u]_{\widetilde{a}^{-1}} \\
M\otimes((H\otimes A)\otimes H)\ar[rr]_{\id\otimes(\tau\otimes \id)} && M\otimes((A\otimes H)\otimes H)\ar[u]_{\id\otimes \widetilde{a}}
}$$
where $\psi_M$ is the right $(A,\beta)$-Hom-module action on $(M,\mu)$,
$\rho_M$ is the right $(H,\alpha)$-Hom-comodule structure of $(M,\mu)$,
$m_H$ is the multiplication of $H$, and $\tau$ is the flip map.
\end{definition}

The diagram is the compatibility condition for $(H,A)$-Hom-Hopf
module, which can be reexpressed as
\begin{eqnarray}\rho_M(m\cdot a)=m_{(0)}\cdot a_{(0)}\otimes m_{(1)}a_{(1)}, \label{eq2.2} \end{eqnarray}
for all $m\in M, a\in A$. We denote $\widetilde{\mathcal {H}}(M_A^H)$ as the category of right $(H,A)$-Hom-Hopf
modules and the right $(H,A)$-Hom-Hopf module morphisms, that is,
both right $(A,\beta)$-Hom-module maps and right $(H,\alpha)$-Hom-comodule morphisms between them.
Similarly, we can define $\widetilde{\mathcal {H}}(_A M^H)$ as the category of the left-right $(H,A)$-Hom-Hopf modules.

In fact, the right $(H,\alpha)$-Hom-comodule algebra
$(A,m_A,\rho_A,\beta)$ is itself a right $(H,A)$-Hom-Hopf-module
via the Hom-comodule structure $\rho_A$ and the Hom-multiplication
$m_A:A\otimes A\rightarrow A$, since the compatibility condition of
$(H,A)$-Hom Hopf modules is just the equality $\eqref{eq2.1.1}$.

\begin{example}
(1) We can induce a relative Hom-Hopf module from a
relative Hopf module $(M,\psi,\rho)$, which similar to induce a Hom-comodule algebra from
a comodule algebra. We just need to twist the action $\psi$ and the coaction $\rho$ into
$\psi_\mu=\mu\psi$ and $\rho_\mu=\rho\mu^{-1}$ respectively,
where $\mu:M\rightarrow M$ is an automorphism such that
$\mu\psi=\psi(\mu\otimes\beta)$ and $\rho\mu=(\mu\otimes\alpha)\rho$.

(2) Let $(A,\beta)$ be a right
$(H,\alpha)$-Hom-comodule algebra, $(M,\mu)$ be a right $(A,\beta)$-Hom-module, then
$(M\otimes H, \mu\otimes\alpha)$ is a right $(H,A)$-Hom-Hopf
module, with the right $(A,\beta)$-Hom-module structure
$\psi:(M\otimes H)\otimes A\rightarrow M\otimes H$; $(m\otimes h)\otimes a\mapsto
(m\otimes h)\cdot a=m\cdot a_{(0)}\otimes ha_{(1)}$
and the right $(H,\alpha)$-Hom-comodule structure
$\rho:M\otimes H\rightarrow (M\otimes H)\otimes H$;
$m\otimes h\mapsto (\mu^{-1}(m)\otimes h_1)\otimes \alpha(h_2)$.
Here we just check the compatibility condition \eqref{eq2.2}:
for any $m\in M, h\in H$ and $a\in A$,
$$\begin{array}{rllr}
(m\otimes h)_{(0)}\cdot a_{(0)}\otimes(m\otimes h)_{(1)}a_{(1)}
& =(\mu^{-1}(m)\otimes h_1)\cdot a_{(0)}\otimes\alpha(h_2)a_{(1)}
\\&  =(\mu^{-1}(m)\cdot a_{(0)(0)}\otimes h_1a_{(0)(1)})\otimes\alpha(h_2)a_{(1)}
\\&  =(\mu^{-1}(m)\cdot \beta^{-1}(a_{(0)})\otimes h_1a_{(1)1})\otimes\alpha(h_2)\alpha(a_{(1)2})
\\&  =(\mu^{-1}(m\cdot a_{(0)})\otimes h_1a_{(1)1})\otimes\alpha(h_2a_{(1)2})
\\&  =\rho(m\cdot a_{(0)}\otimes ha_{(1)})
=\rho ((m\otimes h)\cdot a).
\end{array}$$
In particular, $(A\otimes H,\beta\otimes\alpha)\in \widetilde{\mathcal {H}}(M_A^H)$.

\end{example}

If $(M,\mu)$ is a right $(H,\alpha)$-Hom-module and $(N,\nu)$ is a left $(H,\alpha)$-Hom-module,
{\it the tensor product over $(H,\alpha)$} of $(M,\mu)$ and $(N,\nu)$ in the category
$\widetilde{\mathcal{H}}(\mathcal{M}_{k})$ is defined as
\begin{eqnarray}\label{eq2.2.2}
(M\otimes_HN,\mu\otimes\nu)
=\{m\otimes n\in M\otimes N| m\cdot h\otimes\nu(n)
=\mu(m)\otimes h\cdot n\}.
\end{eqnarray}

And dually, if $(M,\mu)$ is a right $(H,\alpha)$-Hom-comodule and $(N,\nu)$ is a left $(H,\alpha)$-Hom-comodule,
{\it the co-tensor product space} $(M\Box_HN,\mu\otimes\nu)$ in the category $\widetilde{\mathcal{H}}(\mathcal{M}_{k})$
is defined as the following set:
\begin{eqnarray}\label{eq2.2.3}
\{m\otimes n\in M\otimes N| (m_{(0)}\otimes m_{(1)})\otimes\nu^{-1}(n)
=(\mu^{-1}(m)\otimes n_{(-1)})\otimes n_{(0)}\}.
\end{eqnarray}

Let $(A,\beta,\rho_A)$ be a right $(H,\alpha)$-Hom-comodule algebra.
We denote $B=A^{coH}$. If $(N,\nu)$ is a right $(H,A)$-Hom-Hopf modules,
then $(N^{coH},\nu|_{N^{coH}})$ is a right $(B,\beta|_B)$-Hom-submodule of
$(N,\nu)$. Obviously, $N\Box_{H} k\cong N^{coH}$, where $(k,\id)$ is a trivial
 $(H,\alpha)$-Hom-comodule.

For any right $(A,\beta)$-Hom-module $(M,\mu)$,
$(M\otimes_B A, \mu\otimes\alpha)$ is a right $(H,A)$-Hom-Hopf
module, with the action
$(m\otimes a)\otimes b\mapsto \mu(m)\otimes a\beta^{-1}(b)$,
and the coaction
$m\otimes a\mapsto (\mu^{-1}(m)\otimes a_{(0)})\otimes \alpha(a_{(1)})$.
This defines the induction functor
$F:\widetilde{\mathcal{H}}(\mathcal{M}_{B})\rightarrow \widetilde{\mathcal {H}}(M_A^H),
M\mapsto M\otimes_B A$. In fact, $F$ is a left adjoint to the functor of coinvariants
$G:\widetilde{\mathcal {H}}(M_A^H)\rightarrow \widetilde{\mathcal{H}}(\mathcal{M}_{B}),
N\mapsto N^{coH}$. (see the following result).

\begin{proposition}\label{prop:adjiont pair}
$(F,G)$ is a pair of adjoint functors with the unit
$$\eta_{(M,\mu)}: M\rightarrow (M\otimes_B A)^{coH}; \ \  m\mapsto \mu^{-1}(m)\otimes 1_A$$
and counit
$$\epsilon_{(N,\nu)}: N^{coH}\otimes_B A\rightarrow N; \ \ n\otimes a\mapsto n\cdot a,$$
where $(M,\mu)\in\widetilde{\mathcal{H}}(\mathcal{M}_{B}), (N,\nu)\in\widetilde{\mathcal {H}}(M_A^H)$.
\end{proposition}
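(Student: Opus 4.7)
The plan is to check that $\eta$ and $\epsilon$ are well-defined natural transformations in the indicated categories and then verify the two triangle identities. I would proceed in three steps.

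\emph{Step 1: Well-definedness and naturality of $\eta$.} For $(M,\mu)\in\widetilde{\mathcal{H}}(\mathcal{M}_B)$, I first check that $\mu^{-1}(m)\otimes 1_A$ lies in $(M\otimes_BA)^{coH}$: using the coaction $\rho(m'\otimes a)=(\mu^{-1}(m')\otimes a_{(0)})\otimes\alpha(a_{(1)})$ together with $\rho_A(1_A)=1_A\otimes 1_H$ gives $\rho(\mu^{-1}(m)\otimes 1_A)=(\mu^{-2}(m)\otimes 1_A)\otimes 1_H$, which equals $(\mu\otimes\beta)^{-1}(\mu^{-1}(m)\otimes 1_A)\otimes 1_H$. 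Commutation with the twist is immediate from $\mu(\mu^{-1}(m))=m$ and $\beta(1_A)=1_A$. For right $B$-linearity one computes $\eta_{M}(m)\cdot b=(\mu^{-1}(m)\otimes 1_A)\cdot b=m\otimes 1_A\beta^{-1}(b)=m\otimes b$ on the one hand, and $\eta_{M}(m\cdot b)=(\mu^{-1}(m)\cdot\beta^{-1}(b))\otimes 1_A$ on the other; these coincide in $M\otimes_BA$ via the Hom-balance relation applied to $b\in B$. Naturality in $M$ is obvious.

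\emph{Step 2: Well-definedness and naturality of $\epsilon$.} For $(N,\nu)\in\widetilde{\mathcal{H}}(M_A^H)$, the map $n\otimes a\mapsto n\cdot a$ is well-defined on the $B$-balanced tensor product by the Hom-associativity of the $A$-action restricted to $B\subseteq A$. Commutation with twists follows from $\nu(n\cdot a)=\nu(n)\cdot\beta(a)$. Right $A$-linearity uses Hom-associativity: $\epsilon_{N}((n\otimes a)\cdot a')=\nu(n)\cdot(a\beta^{-1}(a'))=(n\cdot a)\cdot a'=\epsilon_{N}(n\otimes a)\cdot a'$. For right $H$-colinearity I use the coinvariance $\rho_N(n)=\nu^{-1}(n)\otimes 1_H$ together with the relative Hom-Hopf module compatibility to obtain
\[
\rho_N(n\cdot a)=n_{(0)}\cdot a_{(0)}\otimes n_{(1)}a_{(1)}=\nu^{-1}(n)\cdot a_{(0)}\otimes 1_H a_{(1)}=\nu^{-1}(n)\cdot a_{(0)}\otimes\alpha(a_{(1)}),
\]
which matches the coaction $\rho(n\otimes a)=(\nu^{-1}(n)\otimes a_{(0)})\otimes\alpha(a_{(1)})$ on $N^{coH}\otimes_BA$ pushed through $\epsilon_N$. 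Naturality in $N$ is clear.

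\emph{Step 3: Triangle identities.} For $G\epsilon\circ\eta G=\id_G$ and $n\in N^{coH}$ one has $n\mapsto\nu^{-1}(n)\otimes 1_A\mapsto\nu^{-1}(n)\cdot 1_A=\nu(\nu^{-1}(n))=n$. For $\epsilon F\circ F\eta=\id_F$ and $m\otimes a\in M\otimes_BA$ one has
\[
m\otimes a\mapsto(\mu^{-1}(m)\otimes 1_A)\otimes a\mapsto(\mu^{-1}(m)\otimes 1_A)\cdot a=m\otimes 1_A\beta^{-1}(a)=m\otimes a,
\]
using $\mu(\mu^{-1}(m))=m$ and the Hom-unit identity $1_A\beta^{-1}(a)=\beta(\beta^{-1}(a))=a$. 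The main obstacle I anticipate is not any deep point but the careful bookkeeping of the twists $\mu,\nu,\alpha,\beta$ and of the Hom-unit rules $a\cdot 1_A=\beta(a)$ and $1_Hh=\alpha(h)$: each instance of $1_A$ or $1_H$ in the above computations interacts with these rules, and in particular the coinvariance of $\eta_{M}(m)$ and the $H$-colinearity of $\epsilon_{N}$ are only correct when the twists are placed as indicated. Beyond that, the argument is a direct application of the Hom-module, Hom-comodule and relative Hom-Hopf module axioms from Section~1 and Definitions~2.1--2.2.
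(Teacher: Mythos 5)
Your proof is correct and takes essentially the same route as the paper's: check that $\eta_{(M,\mu)}$ and $\epsilon_{(N,\nu)}$ are well defined and then verify the two triangle identities, with the same placement of the twists $\mu^{-1}$, $\beta^{-1}$ and the Hom-unit rules. The paper is merely terser (it declares the coinvariance of $\mu^{-1}(m)\otimes 1_A$ obvious and records only the $B$-balance of $\epsilon_{(N,\nu)}$ before the triangle identities), so your additional verifications of coinvariance, $B$-linearity, and $H$-colinearity are filled-in details rather than a different argument.
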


\begin{proof}
Firstly, $\eta_{(M,\mu)}$ and $\epsilon_{(N,\nu)}$ are well-defined:
for any $m\in M$,
$\mu^{-1}(m)\otimes 1_A\in(M\otimes_B A)^{coH}$ is obvious,
and
$\epsilon_{(N,\nu)}(n\otimes ba)=n\cdot(ba)=\epsilon_{(N,\nu)}(\nu^{-1}(n)\cdot b\otimes\beta(a))$,
for any $n\in N$, and $a,b\in A$.
Then we need to check the triangular identity:
$$\epsilon_{F(M,\mu)}  F\eta_{(M,\mu)}(m\otimes a)=(\mu^{-1}(m)\otimes 1_A)\cdot a=m\otimes a,$$
$$G\epsilon_{(N,\nu)}  \eta_{G(N,\nu)}(n)=\nu^{-1}(n)\cdot 1_A=n.$$
\end{proof}

In the same way, the induction functor
$F:\widetilde{\mathcal{H}}(_B\mathcal{M})\rightarrow \widetilde{\mathcal {H}}(_A M^H),
M\mapsto A\otimes_B M$ is left adjoint to $N\mapsto N^{coH}$.

Similarly, for the left-right $(H,A)$-Hom-Hopf module category $\widetilde{\mathcal {H}}(M^H_A)$,
there is another pair of adjoint functors
$$F'=A\otimes_B-:\widetilde{\mathcal{H}}(_{B}\mathcal{M})\rightarrow \widetilde{\mathcal {H}}(_A M^H),$$
and
$$G'=(-)^{coH}:\widetilde{\mathcal {H}}(_A M^H)\rightarrow \widetilde{\mathcal{H}}(_{B}\mathcal{M}),$$
where $\widetilde{\mathcal{H}}(_{B}\mathcal{M})$ is the category of left $(B,\beta)$-Hom-modules.

\section{Hopf-Galois extensions}

In this section, we give some affineness theorems,
providing additional sufficient conditions for
$(F,G)$ or $(F',G')$ to be pairs of inverse equivalences.
We always assume that $(H,\alpha)$ is a monoidal Hom-Hopf
algebra with antipode $S$, and $(A,\beta)$ is a right
$(H,\alpha)$-Hom-comodule algebra.

Since $(A\otimes H,\beta\otimes\alpha)\in\widetilde{\mathcal {H}}(M^H_A)$
and $A$ is $k$-flat, we have
$$(A\otimes H)^{coH}\cong A\otimes H^{coH}\cong A\otimes k\cong A.$$
So the counit map in Proposition \ref{prop:adjiont pair}
is $\epsilon_{A\otimes H}:(A\otimes H)^{coH}\otimes_B A\rightarrow A\otimes H$
which can be translated to the following map:
$${\rm \can}: A\otimes_B A\rightarrow A\otimes H.$$
We find easily that
$${\rm \can} (a\otimes b)=(\beta^{-1}(a)\otimes 1_H)\cdot b=\beta^{-1}(a)b_{(0)}\otimes\alpha(b_{(1)}),$$
for all $a,b \in A$.

Similarly, $(A\otimes H,\beta\otimes\alpha)\in\widetilde{\mathcal {H}}(_A M^H)$,
and the corresponding adjunction map $\epsilon'_{A\otimes H}$ now defines another map
$${\rm \can'}: A\otimes_B A\rightarrow A\otimes H$$
given by
$${\rm \can'} (a\otimes b)=a\cdot(\beta^{-1}(b)\otimes 1_H)=a_{(0)}\beta^{-1}(b)\otimes\alpha(a_{(1)}).$$

\begin{proposition}
Let $(H,\alpha)$ be a monoidal Hom-Hopf algebras with a bijective antipode $S$,
$(A,\beta)$ a right $(H,\alpha)$-Hom-comodule algebra.
The map $f:A\otimes H\rightarrow A\otimes H$ given by
$$a\otimes h\mapsto \beta(a_{(0)})\otimes a_{(1)}S\alpha^{-1}(h)$$
is an isomorphism. Furthermore, ${\rm \can'}=f{\rm \can}$,
so ${\rm \can}$ is an isomorphism if and only if ${\rm \can'}$ is too.
\end{proposition}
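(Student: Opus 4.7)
The statement bundles three claims: (i) $f$ is an isomorphism, (ii) $\can'=f\can$, and (iii) the consequent equivalence of bijectivity of $\can$ and $\can'$. Part (iii) is a formal consequence of (i) and (ii), so the work lies in (i) and (ii). I would do (ii) first, since the resulting computation will suggest the shape of $f^{-1}$.

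For (ii), I would substitute $\can(a\otimes b)=\beta^{-1}(a)b_{(0)}\otimes\alpha(b_{(1)})$ into $f$. Since $\rho_A$ is a Hom-algebra map and $\rho_A\beta^{-1}=(\beta^{-1}\otimes\alpha^{-1})\rho_A$, one gets
\[
\rho_A(\beta^{-1}(a)b_{(0)}) = \beta^{-1}(a_{(0)})b_{(0)(0)}\otimes \alpha^{-1}(a_{(1)})b_{(0)(1)},
\]
and the third factor of $f\can(a\otimes b)$ becomes $S(b_{(1)})$ after $S\alpha^{-1}$ and $\alpha$ cancel. Invoking Hom-coassociativity in the form $\beta(b_{(0)(0)})\otimes b_{(0)(1)}\otimes b_{(1)} = b_{(0)}\otimes b_{(1)1}\otimes \alpha(b_{(1)2})$ and Hom-associativity $(xy)\alpha(z)=\alpha(x)(yz)$, the second and third slots telescope via the antipode identity $b_{(1)1}S(b_{(1)2})=\varepsilon(b_{(1)})1_H$ and the Hom-unit axiom into $\varepsilon(b_{(1)})\alpha(a_{(1)})$. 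Finally, the counit axiom $b_{(0)}\varepsilon(b_{(1)})=\beta^{-1}(b)$ produces $\can'(a\otimes b)=a_{(0)}\beta^{-1}(b)\otimes\alpha(a_{(1)})$.

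For (i), I would produce an explicit two-sided inverse $g:A\otimes H\to A\otimes H$. The classical analogue of $f(a\otimes h)=a_{(0)}\otimes a_{(1)}S(h)$ has inverse $a\otimes h\mapsto a_{(0)}\otimes S^{-1}(h)a_{(1)}$, so my candidate is
\[
g(a\otimes h)=\beta^{-1}(a_{(0)})\otimes S^{-1}(h)\alpha^{-1}(a_{(1)}),
\]
possibly with additional $\alpha^{\pm 1}$ twists on the $H$-factors, to be pinned down by the verification. The checks $gf=\id$ and $fg=\id$ use the same three tools as in (ii)---applying $\rho_A$ to the first slot, Hom-coassociativity, and Hom-associativity---together with the antipode identities $S(h_1)h_2=\varepsilon(h)1_H$ and $h_1 S^{-1}(h_2)=\varepsilon(h)1_H$, the latter valid precisely because $S$ is bijective.

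The main obstacle is purely bookkeeping: no single step is conceptually deep, but the Hom-twists make it easy to misplace an $\alpha^{\pm 1}$ or $\beta^{\pm 1}$ when expanding $\rho_A$ of a product, when invoking the Hom-comodule axiom $\beta^{-1}(m_{(0)})\otimes\Delta(m_{(1)})=m_{(0)(0)}\otimes m_{(0)(1)}\otimes\alpha^{-1}(m_{(1)})$, or when rebracketing via Hom-associativity. Consequently the precise twist exponents in $g$ are best fixed by direct computation rather than guessed. Once (i) and (ii) are established, (iii) is immediate: $\can$ iso $\Rightarrow$ $\can'=f\can$ iso, and conversely $\can=f^{-1}\can'$ whenever $\can'$ is iso.
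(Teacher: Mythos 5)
Your approach is essentially the paper's: it verifies $\can'=f\can$ by exactly the computation you describe (expand $\rho_A(\beta^{-1}(a)b_{(0)})$ via multiplicativity of $\rho_A$, shift with Hom-coassociativity, rebracket with Hom-associativity, collapse with $b_{(1)1}S(b_{(1)2})=\varepsilon(b_{(1)})1_H$ and the counit axiom), and it establishes (i) by exhibiting an explicit inverse. Two details you deferred need fixing, though. First, the correct inverse is $f^{-1}(a\otimes h)=\beta(a_{(0)})\otimes S^{-1}\alpha^{-1}(h)a_{(1)}$; your candidate has $\beta^{-1}$ where $\beta$ is needed on the $A$-factor, so the adjustment is not confined to ``$\alpha^{\pm1}$ twists on the $H$-factors'' as you suggest. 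Second, the identity $h_1S^{-1}(h_2)=\varepsilon(h)1_H$ you invoke is false in general even classically; applying $S^{-1}$ to the antipode axioms yields $S^{-1}(h_2)h_1=\varepsilon(h)1_H$ and $h_2S^{-1}(h_1)=\varepsilon(h)1_H$, and it is the first of these that the check $f^{-1}f=\id$ actually uses (the check $ff^{-1}=\id$ uses $h_1S(h_2)=\varepsilon(h)1_H$). With those corrections your argument goes through and coincides with the paper's.
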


\begin{proof}
For any $a\in A, h\in H$, it is easy to check that the inverse of $f$ is
$$f^{-1}(a\otimes h)=\beta(a_{(0)})\otimes S^{-1}\alpha^{-1}(h)a_{(1)},$$
by the Hom-coassociativity of Hom-comodule algebra $(A,\beta)$,
the Hom-associativity of $(H,\alpha)$ and the property of the antipode.
And for any $a,b\in A$,
$$\begin{array}{rllr}
f{\rm \can}(a\otimes b)
&=f(\beta^{-1}(a)b_{(0)}\otimes\alpha(b_{(1)}))
\\ &=\beta(\beta^{-1}(a)_{(0)}b_{(0)(0)})\otimes (\beta^{-1}(a)_{(1)}b_{(0)(1)})S(b_{(1)})
\\ &=a_{(0)}\beta(b_{(0)(0)})\otimes(\alpha^{-1}(a_{(1)})b_{(0)(1)})S(b_{(1)})
\\ &=a_{(0)}b_{(0)}\otimes(\alpha^{-1}(a_{(1)})b_{(1)1})S\alpha(b_{(1)2})
\\ &=a_{(0)}b_{(0)}\otimes a_{(1)}(b_{(1)1}S(b_{(1)2}))
\\ &=a_{(0)}b_{(0)}\otimes a_{(1)}\varepsilon(b_{(1)})1_H
\\ &=a_{(0)}\beta^{-1}(b)\otimes\alpha(a_{(1)})={\rm \can'}(a\otimes b)
\end{array}$$
\end{proof}

\begin{definition}
Consider a right $(H,\alpha)$-Hom-comodule algebra $(A,\beta)$ and its coinvariance $(B,\beta|_{B})$.
$(A,\beta)$ is called a Hopf-Galois extension of $(B,\beta|_{B})$ if ${\rm \can}$ or ${\rm \can'}$
is an isomorphism.
\end{definition}

If functors $(F,G)$ or $(F',G')$ is a pair of inverse equivalence of categories, then clearly
${\rm \can}$ and ${\rm \can'}$ are isomorphisms.

Now we consider the equivalent conditions for the pair $(F,G)$ to be inverse equivalence.
That is, $(A,\beta)$ is a faithfully flat Hopf-Galois extension.
This result dues to Doi and Takeuchi in \cite{Y. Doi}.

\begin{theorem} \label{theorem faithfull flat}
Let $(H,\alpha)$ be a monoidal Hom-Hopf algebras with a bijective antipode $S$,
$(A,\beta)$ a right $(H,\alpha)$-Hom-comodule algebra.
Then the following are equivalent:

1) $(A,\beta)$ is faithfully flat as a left $(B,\beta|_{B})$-Hom-module, and
$(A,\beta)$ is a Hopf-Galois extension of $(B,\beta|_{B})$;

2) $(F,G)$ is a pair of inverse equivalences between the categories
$\widetilde{\mathcal{H}}(_B\mathcal{M})$ and $\widetilde{\mathcal {H}}(M^H_A)$.
\end{theorem}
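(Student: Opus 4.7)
The plan is to adapt the classical Doi-Takeuchi faithfully flat descent argument to the Hom-setting, using the explicit unit $\eta_M$ and counit $\epsilon_N$ of the adjunction $(F,G)$ provided by Proposition \ref{prop:adjiont pair}.

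For the direction $(2) \Rightarrow (1)$, I would first identify $(A\otimes H,\beta\otimes\alpha)^{coH}$ with $(A,\beta)$ via $a\mapsto a\otimes 1_H$ (this identification is already used at the start of Section 3). Under it, the counit $\epsilon_{A\otimes H}$ becomes precisely the canonical map $\can$, so if $(F,G)$ is an equivalence then $\can$ is an isomorphism and $(A,\beta)$ is Hopf-Galois. Since any equivalence is exact and reflects zero objects, the functor $F=-\otimes_B A$ is exact with the property that $F(M)=0$ forces $M=0$; by the standard characterisation this gives that $(A,\beta)$ is faithfully flat over $(B,\beta|_B)$.

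For the direction $(1) \Rightarrow (2)$, the goal is to prove that $\eta_M$ and $\epsilon_N$ are isomorphisms for every $(M,\mu) \in \widetilde{\mathcal{H}}(\mathcal{M}_B)$ and every $(N,\nu) \in \widetilde{\mathcal{H}}(M^H_A)$. The descent strategy is that, after tensoring with $(A,\beta)$ on the right, the map $\can$ reduces both natural transformations to obvious isomorphisms, and then faithful flatness of $A$ over $B$ allows one to descend. Concretely, I would first show that $\epsilon_N \otimes_B \id_A$ is an isomorphism by constructing an explicit inverse using $\can^{-1}$ applied to the coaction $\rho_N$, with the appropriate Hom-twists by $\nu,\alpha,\beta$ inserted, and then invoke faithful flatness to deduce that $\epsilon_N$ itself is an isomorphism. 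For $\eta_M$, injectivity follows from the inclusion $M\cong M\otimes_B B\hookrightarrow M\otimes_B A$ provided by faithful flatness, and surjectivity is obtained by showing that every coinvariant element of $M\otimes_B A$ can, via $\can^{-1}$, be rewritten in the form $\mu^{-1}(m)\otimes 1_A$; verifying this at the level of $M\otimes_B A\otimes_B A$ and descending by faithful flatness concludes the argument.

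The main obstacle will be the Hom-bookkeeping in the descent step for $\epsilon_N$. In the classical Hopf-algebra case this is a short diagram chase, but in the Hom-category every use of associativity, of the comodule axiom \eqref{eq1.4}, or of the module compatibility \eqref{eq2.2} carries explicit twists by $\mu,\nu,\alpha,\beta$ or their inverses, coming from the associator formula \eqref{eq0}. Checking that the candidate inverse of $\epsilon_N\otimes_B\id_A$ is genuinely two-sided therefore requires careful tracking of these twists, together with the antipode identities \eqref{eq1.6} inserted at the right moment. The categorical skeleton of the argument is the standard Doi-Takeuchi one; the real work is to position the Hom-twists correctly so that each intermediate element lies in the tensor product intended.
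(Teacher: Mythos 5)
Your overall skeleton is correct and is the same Doi--Takeuchi descent argument the paper uses; the direction $(2)\Rightarrow(1)$ is handled identically (counit at $A\otimes H$ equals ${\rm\can}$, and exactness plus conservativity of $-\otimes_BA$ gives faithful flatness), and your treatment of the unit $\eta_M$ --- verify at the level of $M\otimes_B(A\otimes_BA)$ and descend --- is exactly the paper's second ladder diagram, whose top row is the faithfully flat descent (Amitsur) sequence $0\to M\to M\otimes_BA\rightrightarrows M\otimes_B(A\otimes_BA)$ and whose bottom row is the equalizer defining $(M\otimes_BA)^{coH}$, compared via $M\otimes{\rm\can}$ and the short five lemma. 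Where you genuinely diverge is the counit: the paper does \emph{not} tensor $\epsilon_N$ with $A$ and descend; instead it compares the equalizer sequence $0\to N^{coH}\otimes_BA\to N\otimes_BA\rightrightarrows(N\otimes H)\otimes_BA$ (exact by mere flatness of $A$ over $B$) with $0\to N\to N\otimes H\rightrightarrows(N\otimes H)\otimes H$ (whose equalizer is $N\Box_HH\cong N$) via the isomorphisms ${\rm\can}_N$ and ${\rm\can}_{N\otimes H}$, and concludes by the five lemma --- so faithful flatness is needed only for the unit. Be warned that your version of this step has a trap: conjugating $\epsilon_N\otimes_B\id_A$ by ${\rm\can}_N$ and $N^{coH}\otimes_B{\rm\can}$ merely turns it into $\epsilon_N\otimes_k\id_H$, which is an isomorphism if and only if $\epsilon_N$ is, so "tensoring with $A$ makes it an obvious isomorphism" is circular as stated; to make your route work you must either fall back on the equalizer description of $N^{coH}$ (i.e.\ the paper's diagram) or construct the inverse of $\epsilon_N$ directly as $n\mapsto{\rm\can}_N^{-1}(\rho_N(n))$ and prove, via the translation-map identities for ${\rm\can}^{-1}(1_A\otimes-)$ and flatness of $A$ over $B$ (to embed $N^{coH}\otimes_BA$ into $N\otimes_BA$), that this lands in $N^{coH}\otimes_BA$. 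Either repair is standard, but the paper's five-lemma formulation is the cleaner of the two and isolates precisely where flatness versus faithful flatness is used.
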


\begin{proof}
2) $\Rightarrow$ 1) We have already seen that $(A,\beta)$ is a Hopf-Galois extension
of $B$. Let $(M,\mu)\rightarrow (M',\mu')$ be an injective map of right
$(B,\beta)$-Hom-modules. The equivalence of categories
$\widetilde{\mathcal{H}}(_B\mathcal{M})$ and $\widetilde{\mathcal {H}}(M^H_A)$
implies that $M\otimes_B A\rightarrow M'\otimes_B A$ is monic in $\widetilde{\mathcal {H}}(M^H_A)$,
and of course monic in $\widetilde{\mathcal {H}}(M_A)$. Thus $(A,\beta)$ is left $(B,\beta|_{B})$-flat.
Faithfull flatness also follows by the equivalence of the categories
$\widetilde{\mathcal{H}}(_B\mathcal{M})$ and $\widetilde{\mathcal {H}}(M^H_A)$
in a similar way.

1) $\Rightarrow$ 2) For any $(N,\rho_N,\nu)\in\widetilde{\mathcal {H}}(M^H_A)$, we'll prove
that the counit $\epsilon_N$ is an isomorphism. If $(X,\iota)$ is a right
$(A,\beta)$-Hom-module, then the map ${\rm \can}_X$ is defined as the following composition
$$\xymatrix{
X\otimes_B A \ar[r]^-{\cong} &  X\otimes_A(A\otimes_B A) \ar[r]^-{X\otimes {\rm \can}}
&  X\otimes_A(A\otimes H) \ar[r]^-{\cong} & X\otimes H,
}$$
given by
$${\rm \can}_X(x\otimes a)=xa_{(0)}\otimes\alpha(a_{(1)}).$$
Since ${\rm \can}$ is an isomorphism, ${\rm \can}_X$ is also an isomorphism.
Now we have a commutative diagram as follows
$$
\xymatrix{
0 \ar[r]
&  N^{coH}\otimes_B A \ar[d]_-{\epsilon_N} \ar[r]
&  N\otimes_B A \ar[d]_-{{\rm \can}_N}
\ar@<2pt>[rr]^-{\rho_N\otimes_B A}
\ar@<-2pt>[rr]_-{(N\otimes \eta_H)\otimes A}
&&  (N\otimes H)\otimes_B A
\ar[d]^-{{\rm \can}_{N\otimes H}}
\\
0 \ar[r]
&  N  \ar[r]_-{\rho_N}
&  N\otimes H
\ar@<2pt>[rr]^-{\rho_N\otimes H}
\ar@<-2pt>[rr]_-{\widetilde{a}^{-1}(N\otimes\Delta_H)}
&&  (N\otimes H)\otimes H,
}
$$
The top row is exact, since $N^{coH}$ is the equalizer of $\rho_N$
and $N\otimes \eta_H$, and $A$ is flat as left $(B,\beta)$-Hom-modules.
Meanwhile, the equalizer of $\rho_N\otimes H$ and $\widetilde{a}^{-1}(N\otimes\Delta_H)$
is $N\Box_H H\cong N$. So the bottom row is also exact.
${\rm \can}_N$ and ${\rm \can}_{N\otimes H}$ are isomorphisms,
so $\epsilon_N$ is an isomorphism too by the short five lemma.

In addition, the unit $\eta_M: M\rightarrow (M\otimes_B A)^{coH}$ is also an isomorphism
by the following proof.
Define two maps $i_1, i_2: M\otimes_B A\rightarrow M\otimes_B(A\otimes_B A)$ as follows
$$i_1(m\otimes a)=m\otimes(1_A\otimes\beta^{-1}(a)), {\rm and} \ \ i_2(m\otimes a)=m\otimes(\beta^{-1}(a)\otimes 1_A),$$
for all $m\in M$ and $a\in A$. Then we have a commutative diagram
$$
\xymatrix{
0 \ar[r]
& M \ar[d]_-{\eta_M} \ar[r]^-{M\otimes \eta_A}
&  M\otimes_B A \ar@{=}[d]
\ar@<2pt>[rr]^-{i_1}
\ar@<-2pt>[rr]_-{i_2}
&&  M\otimes_B(A\otimes_B A)
\ar[d]^-{M\otimes{\rm \can}}
\\
0 \ar[r]
&  (M\otimes_B A)^{coH}  \ar[r]
& M\otimes_B A
\ar@<2pt>[rr]^-{M\otimes \rho_A}
\ar@<-2pt>[rr]_-{M\otimes(A\otimes\eta_H)}
&&  M\otimes_B(A\otimes H),
}
$$
by the definition of $i_1,i_2$ and $\can$, and the unitality of $(H,\alpha)$ and $(A,\beta)$.
The top row is exact because $A$ is faithfully flat as a left $(B,\beta)$-Hom-modules.
The bottom row is also exact by the definition of the coinvariants.
${\rm \can}$ is an isomorphism, so the adjunction unit $\eta_M$ is too,
again by the short five lemma.
\end{proof}

\section{total integrals}

In this section we consider the Schneider's affineness theorems under the assumption
that there exists a total integral.

\begin{definition}
Let $(A,\beta,\rho_A)$ be a right $(H,\alpha)$-Hom-comodule algebra.
The morphism $\varphi:(H,\alpha)\rightarrow (A,\beta)$ is called a total
integral for $(A,\beta)$ if $\varphi$ is a right $(H,\alpha)$-Hom-comodule map
such that $\varphi(1_H)=1_A$.
\end{definition}

We need some lemmas now for the main result.

\begin{lemma} (see \cite{Y. Y. Chen2014}) \label{lemma:relative}
Let $(A,\beta)$ be a right $(H,\alpha)$-Hom-comodule algebra.
Then the following are equivalent:

(1) there is a total integral,

(2) $(A,\beta)$ is an injective $(H,\alpha)$-Hom-comodule,

(3) all right $(H,A)$-Hom-Hopf module are injective as $(H,\alpha)$-Hom-comodule,

(4) there is a right $(H,\alpha)$-colinear map $\varphi: (H,\alpha)\rightarrow (A,\beta)$
with an invertible element $\varphi(1_H)$ in $A$.
\end{lemma}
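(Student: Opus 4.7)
The strategy is the cycle $(3)\Rightarrow(2)\Rightarrow(1)\Rightarrow(3)$ together with $(1)\Leftrightarrow(4)$. Two of these links are essentially formal: $(3)\Rightarrow(2)$ follows because $(A,\beta)$ is itself a right $(H,A)$-Hom-Hopf module via $\rho_A$ and the Hom-multiplication, as observed just after Definition~2.2, so hypothesis (3) applied to $M=A$ gives injectivity of $A$ as an $H$-Hom-comodule; and $(1)\Rightarrow(4)$ is immediate since $1_A$ is invertible.

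For $(2)\Rightarrow(1)$, injectivity of $A$ as an $H$-Hom-comodule means that the coaction $\rho_A:A\to A\otimes H$, which is a monomorphism into the cofree Hom-comodule $(A\otimes H,\beta\otimes\alpha)$ equipped with coaction $\mathrm{id}_A\otimes\Delta$, admits a colinear retraction $\pi:A\otimes H\to A$. Then $\varphi(h):=\pi(1_A\otimes h)$ is colinear by colinearity of $\pi$, and $\varphi(1_H)=\pi(\rho_A(1_A))=1_A$, giving a total integral. For $(4)\Rightarrow(1)$, set $u:=\varphi(1_H)$; Hom-colinearity of $\varphi$ together with $\Delta(1_H)=1_H\otimes 1_H$ and the counit axiom for Hom-comodules force $\rho_A(u)=u\otimes 1_H=\beta^{-1}(u)\otimes 1_H$, so $u\in A^{coH}$ and therefore $u^{-1}\in A^{coH}$ as well. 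The normalized map $\widetilde{\varphi}(h):=u^{-1}\varphi(h)$ is then colinear, since left multiplication by a coinvariant preserves colinearity, and satisfies $\widetilde{\varphi}(1_H)=u^{-1}u=1_A$ in the Hom-algebra sense.

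For the crucial implication $(1)\Rightarrow(3)$, let $(M,\mu)\in\widetilde{\mathcal{H}}(M_A^H)$. I would construct an explicit colinear retraction of $\rho_M:M\to M\otimes H$, where $M\otimes H$ carries the cofree coaction $\mathrm{id}_M\otimes\Delta$ and is therefore injective as an $H$-Hom-comodule. Following Doi's classical recipe, set
$$\pi_M:M\otimes H\to M,\qquad \pi_M(m\otimes h):=\mu(m_{(0)})\cdot\varphi\bigl(S(m_{(1)})\alpha^{-1}(h)\bigr),$$
using the right $(A,\beta)$-action on $M$ and the total integral $\varphi$ supplied by (1). Two checks are required. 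First, $\pi_M\circ\rho_M=\mathrm{id}_M$ is a direct manipulation combining Hom-coassociativity of $\rho_M$, the antipode identity $S(h_1)h_2=\varepsilon(h)1_H$, the counit axiom on Hom-comodules, and $\varphi(1_H)=1_A$. Second, $H$-colinearity of $\pi_M$ follows from colinearity of $\varphi$ together with the relative-Hopf-module compatibility \eqref{eq2.2}, which converts the diagonal coaction on the composite $\mu(m_{(0)})\cdot\varphi(\cdots)$ into the cofree coaction on the $H$-factor on the right.

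The principal obstacle is combinatorial bookkeeping of the twists $\alpha,\beta,\mu$ and their inverses. The retraction formula has to place these exactly where they are needed so that, after invoking Hom-coassociativity and the compatibility \eqref{eq2.2}, the antipode axiom cleans up as in the untwisted Hopf-algebra case; any misplaced twist breaks the application of $S(h_1)h_2=\varepsilon(h)1_H$. Analogous care is needed in $(4)\Rightarrow(1)$, where one verifies $\beta(u)=u$ (hence $\beta(u^{-1})=u^{-1}$) in order to conclude that multiplication by $u^{-1}$ is colinear in the Hom-sense. The identification of $(A\otimes H,\mathrm{id}\otimes\Delta)$ as an injective Hom-comodule is routine via the Hom-twisted splitting by $\mathrm{id}\otimes\varepsilon$, but should be recorded explicitly at the outset.
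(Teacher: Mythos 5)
The paper does not actually prove this lemma in the text --- it is quoted from \cite{Y. Y. Chen2014} --- so the only comparison available is with how the paper later \emph{uses} it; on that score your proposal is the expected one. The decomposition $(3)\Rightarrow(2)\Rightarrow(1)\Rightarrow(3)$ together with $(1)\Leftrightarrow(4)$ is the standard Doi argument, and your retraction $\pi_M(m\otimes h)=\mu(m_{(0)})\cdot\varphi\bigl(S(m_{(1)})\alpha^{-1}(h)\bigr)$ is precisely (the general-$M$ form of) the section $\lambda_A$ that the paper writes down inside the proof of Theorem 4.7, so the key formula is right and $\pi_M\rho_M=\id_M$ does check out against the Hom-coassociativity axiom. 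However, two of the twists you dismiss as bookkeeping are genuinely wrong as written and need repair, not just verification. First, in $(2)\Rightarrow(1)$: with the coaction $a\otimes h\mapsto(\beta^{-1}(a)\otimes h_1)\otimes\alpha(h_2)$ on $A\otimes H$, the map $h\mapsto\pi(1_A\otimes h)$ satisfies $\rho_A(\varphi(h))=\varphi(h_1)\otimes\alpha(h_2)$, so you must set $\varphi(h)=\pi(1_A\otimes\alpha^{-1}(h))$ to get colinearity. Second, and more substantively, in $(4)\Rightarrow(1)$ the assertion that left multiplication by the coinvariant $u^{-1}$ preserves colinearity fails in the Hom setting: since $\rho_A$ is multiplicative, $\rho_A(u^{-1})=u^{-1}\otimes 1_H$ and $1_Hh_2=\alpha(h_2)$, one computes $\rho_A\bigl(u^{-1}\varphi(h)\bigr)=u^{-1}\varphi(h_1)\otimes\alpha(h_2)$, so $h\mapsto u^{-1}\varphi(h)$ is \emph{not} colinear; the correct normalization is $\widetilde{\varphi}(h)=u^{-1}\varphi(\alpha^{-1}(h))$, which is colinear, satisfies $\widetilde{\varphi}(1_H)=1_A$, and commutes with the twists (here you do correctly use $\beta(u)=u$, which follows at once from $\beta\varphi=\varphi\alpha$). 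With these two corrections, and an explicit record that $(M\otimes H,\mu\otimes\alpha)$ with the cofree-type coaction is injective in $\widetilde{\mathcal{H}}(\mathcal{M}^H)$, the argument goes through.
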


Let $(M,\mu)\in\widetilde{\mathcal {H}}(^H M)$, then $(M,\mu)\in\widetilde{\mathcal {H}}(M^H)$
by $m\mapsto m_{(0)}\otimes S(m_{(-1)})$, for any $m\in M$. Applying the induction functor
$A\otimes-:\widetilde{\mathcal {H}}(M^H)\rightarrow \widetilde{\mathcal {H}}(_AM^H)$.
We find that $A\otimes M\in \widetilde{\mathcal {H}}(_AM^H)$, with structure
$$\psi^l: a\otimes(b\otimes m)\mapsto(\beta^{-1}(a)b)\otimes\mu(m),$$
$$\rho^r: a\otimes m\mapsto (a_{(0)}\otimes m_{(0)})\otimes a_{(1)}S(m_{(-1)}).$$

\begin{lemma} \label{lemma:coH}
With notations as above, we have
$$(A\otimes H)^{coH}=A\Box_HM,$$
for any $M\in\widetilde{\mathcal {H}}(^HM)$.
\end{lemma}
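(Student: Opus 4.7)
The plan is to verify the set equality $(A\otimes M)^{coH}=A\Box_HM$ inside $A\otimes M$ by unpacking both as subspaces defined by explicit equalities and matching them directly.

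First I would spell out both descriptions. Write $\rho_A(a)=a_{(0)}\otimes a_{(1)}$ for the right $(H,\alpha)$-coaction on $A$ and $\lambda_M(m)=m_{(-1)}\otimes m_{(0)}$ for the left $(H,\alpha)$-coaction on $M$. Using the formula for $\rho^r$ given in the statement, the coinvariant description becomes
\[
(A\otimes M)^{coH}=\Bigl\{\textstyle\sum a^i\otimes m^i\ \Big|\ \sum a^i_{(0)}\otimes m^i_{(0)}\otimes a^i_{(1)}S(m^i_{(-1)})=\sum\beta^{-1}(a^i)\otimes\mu^{-1}(m^i)\otimes 1_H\Bigr\},
\]
while \eqref{eq2.2.3} specializes, with $A$ as right and $M$ as left $(H,\alpha)$-Hom-comodule, to
\[
A\Box_HM=\Bigl\{\textstyle\sum a^i\otimes m^i\ \Big|\ \sum a^i_{(0)}\otimes a^i_{(1)}\otimes\mu^{-1}(m^i)=\sum\beta^{-1}(a^i)\otimes m^i_{(-1)}\otimes m^i_{(0)}\Bigr\}.
\]

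For $A\Box_HM\subseteq (A\otimes M)^{coH}$, I would start from the cotensor identity, apply $\id_A\otimes\id_H\otimes\lambda_M$ to both sides to produce an equation in $A\otimes H\otimes H\otimes M$, and then collapse to $A\otimes M\otimes H$ via the map $a\otimes g\otimes h\otimes m\mapsto a\otimes\mu(m)\otimes g\cdot S(\alpha(h))$. On the left one recovers exactly $\rho^r(\sum a^i\otimes m^i)$ after invoking the intertwining $\lambda_M\mu^{-1}=(\alpha^{-1}\otimes\mu^{-1})\lambda_M$. On the right, the left Hom-coassociativity $(m_{(-1)})_1\otimes(m_{(-1)})_2\otimes\mu^{-1}(m_{(0)})=\alpha^{-1}(m_{(-1)})\otimes m_{(0)(-1)}\otimes m_{(0)(0)}$, together with the antipode axiom $h_1 S(h_2)=\varepsilon(h)1_H$ and the left counit axiom $\varepsilon(m_{(-1)})m_{(0)}=\mu^{-1}(m)$, collapses the expression to $\sum\beta^{-1}(a^i)\otimes\mu^{-1}(m^i)\otimes 1_H$, which is the coinvariant identity. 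The reverse inclusion is symmetric: starting from the coinvariant identity, apply $\id_A\otimes\lambda_M\otimes\id_H$ and then contract by $a\otimes g\otimes m\otimes h\mapsto a\otimes g\cdot h\otimes\mu(m)$. The right side simplifies to $\sum\beta^{-1}(a^i)\otimes m^i_{(-1)}\otimes m^i_{(0)}$ by Hom-unitality $g\cdot 1_H=\alpha(g)$ and the intertwining of $\lambda_M$ with $\mu^{-1}$, while the left side, by the same Hom-coassociativity-antipode-counit computation applied to $m^i_{(-1)}$, reduces to $\sum a^i_{(0)}\otimes a^i_{(1)}\otimes\mu^{-1}(m^i)$, recovering the cotensor identity.

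The main obstacle is not any single hard step but the bookkeeping of Hom-twists. At each stage one must use, at the correct spot, Hom-multiplicativity $\alpha(xy)=\alpha(x)\alpha(y)$, Hom-unitality $1_H\cdot g=g\cdot 1_H=\alpha(g)$, Hom-associativity $\alpha(x)(yz)=(xy)\alpha(z)$, the comodule intertwinings $\rho_A\beta=(\beta\otimes\alpha)\rho_A$ and $\lambda_M\mu=(\alpha\otimes\mu)\lambda_M$, and the commutation $S\alpha=\alpha S$. Aligning these so that the antipode identity applied to the coproduct of $m^i_{(-1)}$ produces the desired cancellation without leaving spurious $\alpha^{\pm1}$ factors is the only subtle aspect; once the twists are in place, the statement follows immediately from the Hom-Hopf axioms.
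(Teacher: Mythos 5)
Your overall strategy---describing $(A\otimes M)^{coH}$ and $A\Box_HM$ as subspaces of $A\otimes M$ cut out by explicit identities and passing between them by applying the left coaction $\lambda_M$ and cancelling with the antipode---is exactly the paper's, and your inclusion $A\Box_HM\subseteq(A\otimes M)^{coH}$ is the paper's ``conversely'' computation verbatim: under your contraction $a\otimes g\otimes h\otimes m\mapsto a\otimes\mu(m)\otimes g\cdot S\alpha(h)$ the cotensor identity's right-hand side becomes $\beta^{-1}(a)\otimes\mu(m_{(0)(0)})\otimes m_{(-1)}S\alpha(m_{(0)(-1)})=\beta^{-1}(a)\otimes m_{(0)}\otimes\alpha(m_{(-1)1})S\alpha(m_{(-1)2})$, and the axiom $h_1S(h_2)=\varepsilon(h)1_H$ fires as you say. (You also silently correct the statement's typo $(A\otimes H)^{coH}$ to $(A\otimes M)^{coH}$, which is right.)

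The reverse inclusion, however, does not go through with the contraction you specify, and the failure is precisely in the spot you flagged as the only subtle one. Starting from the coinvariance identity and applying $\id_A\otimes\lambda_M\otimes\id_H$, the left-hand side is $a_{(0)}\otimes m_{(0)(-1)}\otimes m_{(0)(0)}\otimes a_{(1)}S(m_{(-1)})$, which left Hom-coassociativity rewrites as $a_{(0)}\otimes m_{(-1)2}\otimes\mu^{-1}(m_{(0)})\otimes a_{(1)}S\alpha(m_{(-1)1})$. Your contraction $a\otimes g\otimes m\otimes h\mapsto a\otimes g\cdot h\otimes\mu(m)$ then produces the product $m_{(-1)2}\bigl(a_{(1)}S\alpha(m_{(-1)1})\bigr)$: the unantipoded leg is the \emph{second} coproduct component and sits to the \emph{left} of $S$ of the first, with $a_{(1)}$ wedged between them. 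Neither $S(h_1)h_2=\varepsilon(h)1_H$ nor $h_1S(h_2)=\varepsilon(h)1_H$ applies to this configuration, and Hom-associativity only rebrackets, it cannot reorder; so the left side does not collapse to $a_{(0)}\otimes a_{(1)}\otimes\mu^{-1}(m)$ unless $H$ is cocommutative. The paper's contraction has the opposite order: it sends $a\otimes g\otimes m\otimes h$ to $a\otimes\mu(m)\otimes h\cdot\alpha^2(g)$, whence Hom-associativity gives $\bigl(a_{(1)}S\alpha(m_{(-1)1})\bigr)\alpha^2(m_{(-1)2})=\alpha(a_{(1)})\bigl(S\alpha(m_{(-1)1})\alpha(m_{(-1)2})\bigr)=\varepsilon(m_{(-1)})\alpha^2(a_{(1)})$, and the resulting identity rescales to the cotensor condition. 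With that one correction to the order (and the accompanying $\alpha^2$), your argument coincides with the paper's proof.
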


\begin{proof}
It is easy to check that $A\otimes M\in \widetilde{\mathcal {H}}(_AM^H)$ with the above structure.
Setting $a\otimes m\in(A\otimes M)^{coH}$, then we have
$$\rho^r(a\otimes m)=(a_{(0)}\otimes m_{(0)})\otimes a_{(1)}S(m_{(-1)})=(\beta^{-1}(a)\otimes\mu^{-1}(m))\otimes 1_H.$$
Firstly, we apply $\rho_m$ on the section factor, then
$$(a_{(0)}\otimes (m_{(0)(-1)}\otimes m_{(0)(0)}))\otimes a_{(1)}S(m_{(-1)})
=(\beta^{-1}(a)\otimes(\alpha^{-1}(m_{(-1)})\otimes\mu^{-1}(m_{(0)})))\otimes1_H.$$
Using the Hom-coassociativity of $(M,\mu)$ and applying $\alpha^2$ to the second factor,
then multiplying it to the last factor, we obtain
$$(a_{(0)}\otimes \mu^{-1}(m_{(0)}))\otimes\alpha(a_{(1)})(S\alpha(m_{(-1)1})\alpha(m_{(-1)2}))
=(\beta^{-1}(a)\otimes\mu^{-1}(m_{(0)}))\otimes\alpha^2(m_{(-1)}),$$
by the associativity of $(H,\alpha)$.
Using the property of the antipode, we have
$$(a_{(0)}\otimes\mu^{-2}(m))\otimes\alpha^2(a_{(1)})=(\beta^{-1}(a)\otimes\mu^{-1}(m_{(0)}))\otimes\alpha^2(m_{(-1)}),$$
which is equivalent to
$$(a_{(0)}\otimes a_{(1)})\otimes\mu^{-1}(m)=(\beta^{-1}(a)\otimes m_{(-1)})\otimes m_{(0)}.$$
This means that $a\otimes m$ is also in $A\Box_H M$.

Conversely, if $a\otimes m\in A\Box_H M$, then we get
$$\begin{array}{rllr}
\rho^r(a\otimes m) &= (a_{(0)}\otimes m_{(0)})\otimes a_{(1)}S(m_{(-1)})
\\ &=(\beta^{-1}(a)\otimes\mu(m_{(0)(0)}))\otimes m_{(-1)}S\alpha(m_{(0)(-1)})
\\ &=(\beta^{-1}(a)\otimes m_{(0)})\otimes \alpha(m_{(-1)1})S\alpha(m_{(-1)2})
\\ &=(\beta^{-1}(a)\otimes m_{(0)})\otimes \varepsilon(m_{(-1)})1_H
\\ &=(\beta^{-1}(a)\otimes \mu^{-1}(m))\otimes 1_H,
\end{array}$$
so $a\otimes m\in (A\otimes M)^{coH}$. Thus $(A\otimes H)^{coH}=A\Box_HM$.
\end{proof}

\begin{lemma} \label{lemma:ip}
If $(N,\nu)\in\widetilde{\mathcal {H}}(M_A^H)$, then we have well-defined maps
$$i: N^{coH} \rightarrow A\Box_H N; \ \ n\mapsto 1_A\otimes\nu^{-1}(n),$$
and
$$p: A\Box_H N \rightarrow N^{coH}; \ \ a\otimes n\mapsto n\cdot a$$
such that $pi=N^{coH}$, where the left $(H,\alpha)$-Hom-comodule on
$(N,\nu)$ is given by $n\mapsto S(n_{(1)})\otimes n_{(0)}$.
\end{lemma}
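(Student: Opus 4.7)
The plan is to check three assertions in sequence: (1) $i$ lands in $A\Box_H N$; (2) $p$ lands in $N^{coH}$; and (3) $pi=\mathrm{id}_{N^{coH}}$. Assertion (3) is immediate from the Hom-module unit axiom $m\cdot 1_A=\nu(m)$: indeed $p(i(n)) = p(1_A\otimes\nu^{-1}(n)) = \nu^{-1}(n)\cdot 1_A = \nu(\nu^{-1}(n))=n$. Assertion (1) is a direct plug-in into the defining equation of $A\Box_H N$. Taking $a=1_A$ and the second slot equal to $\nu^{-1}(n)$ with $n\in N^{coH}$, one side becomes $(1_A\otimes 1_H)\otimes\nu^{-2}(n)$ by $\rho_A(1_A)=1_A\otimes 1_H$, and the other side becomes $1_A\otimes S(1_H)\otimes\nu^{-2}(n)$ after computing $\rho_N(\nu^{-1}(n))=\nu^{-2}(n)\otimes 1_H$ from $\rho_N(n)=\nu^{-1}(n)\otimes 1_H$ together with $\rho_N(\nu^{-1}(m))=\nu^{-1}(m_{(0)})\otimes\alpha^{-1}(m_{(1)})$. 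Since $S(1_H)=1_H$, the two sides agree.

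The main obstacle is assertion (2): showing that if
$$(\star)\quad a_{(0)}\otimes a_{(1)}\otimes\nu^{-1}(n)=\beta^{-1}(a)\otimes S(n_{(1)})\otimes n_{(0)},$$
then $\rho_N(n\cdot a)=\nu^{-1}(n\cdot a)\otimes 1_H$. The strategy is to exhibit a single $k$-linear map that sends the LHS of $(\star)$ to $\rho_N(n\cdot a)$ and the RHS of $(\star)$ to $\nu^{-1}(n\cdot a)\otimes 1_H$. Concretely, I would first apply $\mathrm{id}_A\otimes\mathrm{id}_H\otimes\rho_N$ to both sides of $(\star)$, and then post-compose with the linear map $\Phi: A\otimes H\otimes N\otimes H\to N\otimes H$ defined by $\Phi(x\otimes h\otimes y\otimes g)=\nu(y)\cdot x\otimes\alpha(g)h$. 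On the LHS of $(\star)$, the twists $\nu$ and $\alpha$ precisely cancel the $\nu^{-1}$ and $\alpha^{-1}$ produced by $\rho_N(\nu^{-1}(n))=\nu^{-1}(n_{(0)})\otimes\alpha^{-1}(n_{(1)})$, and the output is exactly $n_{(0)}\cdot a_{(0)}\otimes n_{(1)}a_{(1)}=\rho_N(n\cdot a)$ by the $(H,A)$-Hom-Hopf module compatibility \eqref{eq2.2}.

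The core computation is then to simplify the image of the RHS, namely $\nu(n_{(0)(0)})\cdot\beta^{-1}(a)\otimes\alpha(n_{(0)(1)})S(n_{(1)})$. The key step is to invoke Hom-coassociativity of the right $(H,\alpha)$-coaction on $N$, which after adjusting twists reads $\nu(n_{(0)(0)})\otimes\alpha(n_{(0)(1)})\otimes n_{(1)}=n_{(0)}\otimes\alpha(n_{(1)1})\otimes\alpha(n_{(1)2})$, and substitute this into the RHS. Using $S\alpha=\alpha S$ and the fact that $\alpha$ is a Hom-algebra morphism, the $H$-tensor factor collapses to $\alpha(n_{(1)1}S(n_{(1)2}))=\alpha(\varepsilon(n_{(1)})1_H)=\varepsilon(n_{(1)})1_H$ by the antipode axiom \eqref{eq1.6}. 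The remaining term $\varepsilon(n_{(1)})n_{(0)}\cdot\beta^{-1}(a)\otimes 1_H$ is simplified by the comodule counit identity $\varepsilon(n_{(1)})n_{(0)}=\nu^{-1}(n)$ from \eqref{eq1.4} together with the Hom-module identity $\nu^{-1}(n)\cdot\beta^{-1}(a)=\nu^{-1}(n\cdot a)$, yielding $\nu^{-1}(n\cdot a)\otimes 1_H$ as required. This finishes assertion (2), and hence the lemma.
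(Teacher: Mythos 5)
Your proposal is correct and follows essentially the same route as the paper: the identity $pi=\mathrm{id}$ and the membership $i(n)\in A\Box_H N$ are checked directly, and for $p$ you apply $\rho_N$ to the third tensor factor of the cotensor relation, invoke Hom-coassociativity, and collapse the $H$-factor via the antipode and counit axioms — exactly the paper's computation, merely repackaged as a single map $\Phi$ applied to both sides rather than as an intermediate identity that is then substituted into $\rho_N(n\cdot a)=n_{(0)}\cdot a_{(0)}\otimes n_{(1)}a_{(1)}$.
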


\begin{proof}
Firstly, the counitality and Hom-coassociativity of $(N,\nu)$ imply that
$(N,\nu)$ is also a left $(H,\alpha)$-Hom-comodule via $n\mapsto S(n_{(1)})\otimes n_{(0)}$.

Next, $i$ is well-defined, since taking $n\in N^{coH}$,
$i(n)=1_A\otimes\nu^{-1}(n)\in A\Box_H N$ is obvious by the left action on $N$
and the definition of coinvariants.

Also, $p$ is well-defined. Taking $a\otimes n\in A\Box_H N$, then
$$(a_{(0)}\otimes a_{(1)})\otimes\nu^{-1}(n)=(\beta^{-1}(a)\otimes S(n_{(1)}))\otimes n_{(0)}.$$
Applying $\rho_N$ to the last fact and using the Hom-coassociativity of $(N,\nu)$,
we have
\begin{eqnarray}\label{eq1}
(a_{(0)}\otimes a_{(1)})\otimes(n_{(0)}\otimes n_{(1)})
=(\beta^{-1}(a)\otimes S \alpha(n_{(1)2}))\otimes (n_{(0)}\otimes\alpha(n_{(1)1})).
\end{eqnarray}
Hence,
$$\begin{array}{rllr}
\rho_N(n\cdot a) &= n_{(0)}\cdot a_{(0)}\otimes n_{(1)}a_{(1)}
\\ &\stackrel{\eqref{eq1}}=n_{(0)}\cdot\beta^{-1}(a)\otimes\alpha(n_{(1)1})S\alpha(n_{(1)2})
\\ &=n_{(0)}\cdot\beta^{-1}(a)\otimes \varepsilon(n_{(1)})1_H
\\ &=\nu^{-1}(n)\cdot\beta^{-1}(a)\otimes 1_H
\\ &=\nu^{-1}(n\cdot a)\otimes 1_H
\end{array}$$
That is, $p(a\otimes n)\in N^{coH}$, which is as required.

Last, $p i(n)=p(1_A\otimes\nu^{-1}(n))=\nu^{-1}(n)\cdot 1_A=n$,
for all $n\in N$.
\end{proof}

\begin{proposition}
Let $(A,\beta)$ be a right $(H,\alpha)$-Hom-comodule algebra.
Then the following are equivalent:

1) $(A,\beta)$ is right $(H,\alpha)$-coflat;

2) $G=(-)^{coH}:\widetilde{\mathcal {H}}(M_A^H)\rightarrow \widetilde{\mathcal {H}}(M_B)$
is an exact functor;

3) $G'=(-)^{coH}:\widetilde{\mathcal {H}}(_A M^H)\rightarrow \widetilde{\mathcal {H}}(_B M)$
is an exact functor;
\end{proposition}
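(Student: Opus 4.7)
The plan rests on the observation that $G$ and $G'$ are right adjoints by Proposition \ref{prop:adjiont pair}, hence automatically left exact; the content of (2) and (3) is therefore only the right exactness of $(-)^{coH}$. I would establish the equivalences $(1)\Leftrightarrow(2)$ and $(1)\Leftrightarrow(3)$ separately, each time using the two lemmas above (or their sidedness-mirrored versions).

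For $(1)\Rightarrow(2)$, take a short exact sequence $0\to N'\to N\to N''\to 0$ in $\widetilde{\mathcal{H}}(M_A^H)$. Passing to left $(H,\alpha)$-Hom-comodule structures via the antipode (as in the setup preceding Lemma \ref{lemma:coH}) turns this into a short exact sequence of left Hom-comodules, and coflatness then forces $A\Box_H(-)$ to preserve it. Lemma \ref{lemma:ip} exhibits $(-)^{coH}$ as a natural retract of $A\Box_H(-)$ on $\widetilde{\mathcal{H}}(M_A^H)$: the maps $i_N$ and $p_N$ are natural in $N$ (a direct check from their explicit formulas) and satisfy $p_Ni_N=\id$. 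A small diagram chase in
$$
\xymatrix{ A\Box_H N \ar@{->>}[r]\ar[d]_{p_N} & A\Box_H N'' \ar[d]^{p_{N''}} \\ N^{coH}\ar[r] & (N'')^{coH} }
$$
then lifts any $n\in(N'')^{coH}$: choose $\xi\in A\Box_HN$ above $i_{N''}(n)$ and observe that $p_N(\xi)\in N^{coH}$ maps to $p_{N''}i_{N''}(n)=n$. The proof of $(1)\Rightarrow(3)$ is the mirror image, using the left-handed analog of Lemma \ref{lemma:ip}, namely $i(n)=1_A\otimes\nu^{-1}(n)$ and $p(a\otimes n)=a\cdot n$ for $(N,\nu)\in\widetilde{\mathcal{H}}(_AM^H)$; the verification is the calculation of Lemma \ref{lemma:ip} with left and right actions interchanged.

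For the converses, $(3)\Rightarrow(1)$ is immediate from Lemma \ref{lemma:coH}. Given a short exact sequence $0\to M'\to M\to M''\to 0$ in $\widetilde{\mathcal{H}}(^HM)$, the functor $A\otimes(-)$ is exact over $k$ and lands in $\widetilde{\mathcal{H}}(_AM^H)$, so composing with the assumed exact $G'$ and applying the identification $(A\otimes M)^{coH}\cong A\Box_HM$ shows $A\Box_H(-)$ is exact. For $(2)\Rightarrow(1)$, I would first prove the right-sided companion of Lemma \ref{lemma:coH}: for $M\in\widetilde{\mathcal{H}}(^HM)$, equip $M\otimes A$ with the right $A$-action $(m\otimes a)\cdot b=\mu(m)\otimes ab$ and the right $H$-coaction $m\otimes a\mapsto (m_{(0)}\otimes a_{(0)})\otimes S^{-1}(m_{(-1)})a_{(1)}$, making it an object of $\widetilde{\mathcal{H}}(M_A^H)$ whose coinvariants untangle, by the very calculation in Lemma \ref{lemma:coH} performed with $S^{-1}$ in place of $S$, to $A\Box_H M$. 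The same exactness argument then applies with $G$ in place of $G'$.

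The principal technical obstacle is verifying these two sidedness-mirrored analogs of Lemmas \ref{lemma:coH} and \ref{lemma:ip}: both require careful bookkeeping with the Hom-twists $\mu,\beta,\alpha$ and the antipode (sometimes $S^{-1}$ rather than $S$) when transferring between left and right comodule structures. Once they are in place, the whole proposition collapses to the adjunction together with the short diagram chase above.
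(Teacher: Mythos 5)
Your proposal is correct and follows essentially the same route as the paper: $(1)\Rightarrow(2)$ via the retract $p_N i_N=\mathrm{id}$ from Lemma \ref{lemma:ip} together with surjectivity of $A\Box_H f$ supplied by coflatness, and $(3)\Rightarrow(1)$ via Lemma \ref{lemma:coH} and the factorization of $A\Box_H(-)$ as $G'\circ(A\otimes -)$. The only divergence is presentational: where you propose to verify sidedness-mirrored versions of Lemmas \ref{lemma:coH} and \ref{lemma:ip} by hand (your main anticipated technical burden), the paper disposes of $(1)\Rightarrow(3)$ and $(2)\Rightarrow(1)$ in one line each by applying the already-proved implications to $A^{\mathrm{op}}$ as an $H^{\mathrm{op}}$-Hom-comodule algebra, using $\widetilde{\mathcal{H}}({}_A M^H)=\widetilde{\mathcal{H}}(M^{H^{\mathrm{op}}}_{A^{\mathrm{op}}})$.
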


\begin{proof}
1)$\Rightarrow$2).  It's clear that $G$ is left exact.
Assume that $f:(N,\nu)\rightarrow (N',\nu')$ is surjective in
$\widetilde{\mathcal {H}}(M_A^H)$. $A\Box_H f$ is surjective
because $(A,\beta)$ is right $(H,\alpha)$-coflat. Since $f$
is a morphism in $\widetilde{\mathcal {H}}(M_k)$,
there is a commutative diagram
$$
\xymatrix{
A\Box_H N \ar[r]^-{A\Box_H f}\ar@<2pt>[d]^-{p}  &  A\Box_H N' \ar@<2pt>[d]^-{p}
\\ N^{coH} \ar[r]_{f} \ar@<2pt>[u]^-{i} & N'^{coH},\ar@<2pt>[u]^-{i}
}
$$
which implies that $f: N^{coH}\rightarrow N'^{coH}$ is surjective,
where $p,i$ are the maps defined in Lemma \ref{lemma:ip}.

3)$\Rightarrow$ 1). From Lemma \ref{lemma:coH}, we have known that
$(A\otimes M)^{coH}=A\Box_H M$. Then $A\Box_H (-)$ is the composition
of the functors
$$
\xymatrix{
\widetilde{\mathcal {H}}(^H M) \ar[r]^-{A\otimes(-)}
& \widetilde{\mathcal {H}}(_A M^H) \ar[r]^-{G'}
&  \widetilde{\mathcal {H}}(_B M).
}$$
$A\otimes (-)$ is exact since $(A,\beta)$ is $k$-flat, and $G'$ is also exact by the assumption.
It follows that $A\Box_H (-)$ is exact. Hence $(A,\beta)$ is right $(H,\alpha)$-coflat.

1)$\Rightarrow$ 3). We can apply ``1)$\Rightarrow$ 2)" to $A^{op}$ as $H^{op}$-Hom-comodule
algebra. Therefore,
$$N\rightarrow N^{coH}, N\in \widetilde{\mathcal {H}}(_A M^H)=\widetilde{\mathcal {H}}(M^{H^{op}}_{A^{op}})$$
is exact.

2)$\Rightarrow$ 1). This is done in a similar way: applying ``3)$\Rightarrow$ 1)" to $A^{op}$.
\end{proof}

\begin{lemma} \label{lemma:unit}
Assume that $(A,\beta)$ is a right $(H,\alpha)$-Hom-comodule algebra, and that
$\varphi:(H,\alpha)\rightarrow (A,\beta)$ is a total integral. For any $(M,\mu)\in\widetilde{\mathcal {H}}(M_B)$,
the adjunction unit $\eta_M: M\rightarrow (M\otimes_B A)^{coH}$ is an isomorphism.
\end{lemma}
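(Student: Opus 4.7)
The plan is to construct an explicit two-sided inverse to $\eta_M$ using the total integral $\varphi$, following the classical Doi-Takeuchi strategy adapted to the monoidal Hom-Hopf setting.

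First, I would build a ``conditional-expectation''-style map $\pi : A \to B$ out of $\varphi$. The model formula (up to the $\alpha$- and $\beta$-twists needed to make sources/targets balance) is
\[
\pi(a) = a_{(0)}\,\varphi\bigl(S(a_{(1)})\bigr).
\]
Three properties must be verified: (a) $\pi(a) \in B$; (b) $\pi(1_A) = 1_A$; (c) $\pi$ is left $B$-linear. For (a), apply $\rho_A$ to $\pi(a)$, use the multiplicativity of $\rho_A$ from \eqref{eq2.1.1}, the right $(H,\alpha)$-colinearity $\rho_A(\varphi(h)) = \varphi(h_1) \otimes h_2$ of $\varphi$, and the anti-comultiplicativity of $S$, then use the Hom-coassociativity of $(A,\beta)$ (in the form $\beta^{-1}(a_{(0)}) \otimes \Delta(a_{(1)}) = a_{(0)(0)} \otimes a_{(0)(1)} \otimes \alpha^{-1}(a_{(1)})$) and the Hom-coassociativity of $(H,\alpha)$ to re-cluster, and finally collapse the resulting expression via $h_1 S(h_2) = \varepsilon(h)1_H$. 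For (b), the computation is immediate from $\varphi(1_H) = 1_A$ and $1_A 1_A = 1_A$. For (c), for $b \in B$ one has $\rho_A(b) = \beta^{-1}(b) \otimes 1_H$, so applying $\rho_A$ to $ba$ in the definition of $\pi(ba)$ gives $\pi(ba) = b\,\pi(a)$ after one use of Hom-associativity.

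Second, I would define the candidate inverse $\xi : (M\otimes_B A)^{coH} \to M$ by
\[
\xi(m \otimes a) = m \cdot \pi(a),
\]
where well-definedness over $\otimes_B$ comes from the left $B$-linearity of $\pi$ established above, and the image lies in $M$ because $\pi(a) \in B$ and $M$ is a right $(B,\beta|_B)$-Hom-module.

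Third, I would check the two triangle identities. For $\xi \circ \eta_M = \mathrm{id}_M$ we compute
$\xi(\mu^{-1}(m) \otimes 1_A) = \mu^{-1}(m) \cdot \pi(1_A) = \mu^{-1}(m) \cdot 1_A = m$, using (b) and the Hom-module unitality $m\cdot 1_A = \mu(m)$. For $\eta_M \circ \xi = \mathrm{id}$, take $\sum_i m_i \otimes a_i \in (M\otimes_B A)^{coH}$ and use the coinvariance condition
$\sum_i \mu^{-1}(m_i) \otimes a_{i,(0)} \otimes \alpha(a_{i,(1)}) = \sum_i \mu^{-2}(m_i) \otimes \beta^{-1}(a_i) \otimes 1_H$,
which lets us ``transport'' the $H$-component into $A$ via $\varphi \circ S$; combined with the antipode identity this collapses $\sum_i \mu^{-1}(m_i \cdot \pi(a_i)) \otimes 1_A$ back to $\sum_i m_i \otimes a_i$.

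The main obstacle will be the bookkeeping of the $\alpha$- and $\beta$-twists, especially in step (a): one has to interleave Hom-coassociativity of $(A,\beta)$ with Hom-coassociativity of $(H,\alpha)$ and the antipode anti-comultiplicativity, and get all the applications of $\alpha^{\pm 1}, \beta^{\pm 1}$ to line up so that the antipode identity cleanly yields $\pi(a) \otimes 1_H$. It may take a couple of attempts to fix the precise formula for $\pi$ (possibly introducing an extra $\beta$ in front of $a_{(0)}$ or an $\alpha^{-1}$ inside $\varphi \circ S$) so that every identity in steps (a)--(c) and in the triangle identities balances exactly; the rest of the argument is then routine verification.
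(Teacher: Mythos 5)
Your proposal follows essentially the same route as the paper: the paper defines exactly your map $t(a)=a_{(0)}\varphi(S(a_{(1)}))$, verifies $t(a)\in B$ by applying $\rho_A$ and collapsing with the antipode identity, and checks that $m\otimes a\mapsto m\cdot t(a)$ is a two-sided inverse of $\eta_M$ using the coinvariance condition, just as you outline. Your model formula for $\pi$ needs no further twist corrections, so the remaining work is only the routine bookkeeping you anticipate.
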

\begin{proof}
Define a map in $\widetilde{\mathcal {H}}(M_k)$:
$$t:(A,\beta)\rightarrow (B,\beta|_B); a\mapsto a_{(0)}\varphi(S(a_{(1)})).$$
It's well-defined since
$$\begin{array}{rllr}
\rho_A(t(a)) &= a_{(0)(0)}(\varphi S(a_{(1)}))_{(0)}\otimes a_{(0)(1)}(\varphi S(a_{(1)}))_{(1)}
\\&=a_{(0)(0)}\varphi S(a_{(1)2})\otimes a_{(0)(1)}\varphi S(a_{(1)1})
\\&=\beta^{-1}(a_{(0)})\varphi S \alpha(a_{(1)22})\otimes a_{(1)1}S \alpha(a_{(1)21})
\\&=\beta^{-1}(a_{(0)})\varphi S(a_{(1)2})\otimes \alpha(a_{(1)11})S \alpha(a_{(1)12})
\\&=\beta^{-1}(a_{(0)})\varphi S(a_{(1)2})\otimes \varepsilon(a_{(1)1}) 1_H
\\&=\beta^{-1}(a_{(0)})\varphi S \alpha^{-1}(a_{(1)})\otimes 1_H
\\&=\beta^{-1}(a_{(0)}\varphi S(a_{(1)}))\otimes 1_H=\beta^{-1}(t(a))\otimes 1_H.
\end{array}$$
That is, $t(a)\in A^{coH}=B$.

Now define
$$\phi_M:(M\otimes_B A)^{coH}\rightarrow M, m\otimes a\mapsto m\cdot t(a).$$
In fact, $\phi_M$ is the inverse of $\eta_M$ by the following computation:
$$\begin{array}{rllr}
\phi_M\eta_M(m) &=\phi_M(\mu^{-1}(m)\otimes 1_A) =\mu^{-1}(m)\cdot t(1_A)
\\&=\mu^{-1}(m)\cdot(1_A\varphi S(1_H))=\mu^{-1}(m)\cdot 1_A=m,
\end{array}$$
and
$$\begin{array}{rllr}
\eta_M\phi_M(m\otimes a) & =\eta_M(m\cdot t(a))=\mu^{-1}(m\cdot t(a))\otimes 1_A
\\ &=\mu^{-1}(m)\cdot\beta^{-1} t(a)\otimes 1_A=m\otimes \beta^{-1} t(a)1_A
\\ &=m\otimes t(a)=m\otimes a_{(0)}\varphi S(a_{(1)})
\\ &=m\otimes \beta^{-1}(a)\varphi S(1_H)=m\otimes a,
\end{array}$$
where we use the facts $\beta^{-1}t(a)\in B$ in the fourth step,
and $m\otimes a\in(M\otimes_B A)^{coH}$ in the last step but one.
\end{proof}

\begin{theorem} \label{theorem total integral}
Let $(H,\alpha)$ be a monoidal Hom-Hopf algebra with a bijective antipode $S$,
$(A,\beta)$ a right $(H,\alpha)$-Hom-comodule algebra.
If ${\rm \can}$ is surjective, and if there exists a total integral
$\varphi:(H,\alpha)\rightarrow (A,\beta)$, then the adjoint pair $(F,G)$
between $\widetilde{\mathcal {H}}(M_A^H)$ and $\widetilde{\mathcal {H}}(M_B)$
is a pair of inverse equivalences.
\end{theorem}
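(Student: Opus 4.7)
The strategy is to show that both the unit $\eta_M$ and the counit $\epsilon_N$ of the adjunction $(F,G)$ are natural isomorphisms. The unit side is already handled: Lemma \ref{lemma:unit} shows that the existence of the total integral $\varphi$ alone forces $\eta_M:M\to (M\otimes_B A)^{coH}$ to be an isomorphism for every $(M,\mu)\in\widetilde{\mathcal{H}}(M_B)$, with explicit inverse $\phi_M(m\otimes a)=m\cdot t(a)$ where $t(a)=a_{(0)}\varphi S(a_{(1)})$. Consequently, the substantive work is to show that the counit $\epsilon_N: N^{coH}\otimes_B A\to N$ is an isomorphism for every $(N,\nu)\in\widetilde{\mathcal{H}}(M_A^H)$.

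To do this, I would replay the diagrammatic argument from the proof of Theorem \ref{theorem faithfull flat}, arranging $\epsilon_N$, $\can_N$, and $\can_{N\otimes H}$ as the vertical arrows of a commutative diagram whose top row is the candidate equalizer sequence $N^{coH}\otimes_B A\to N\otimes_B A\rightrightarrows (N\otimes H)\otimes_B A$ and whose bottom row is the genuine equalizer sequence $N\to N\otimes H\rightrightarrows (N\otimes H)\otimes H$ expressing the isomorphism $N\cong N\Box_H H$. The factorization $X\otimes_B A\cong X\otimes_A(A\otimes_B A)$ shows that each $\can_X$ is obtained from $\can$ by tensoring, so the hypothesis that $\can$ is surjective immediately gives surjectivity of $\can_N$ and $\can_{N\otimes H}$. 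One then upgrades these to honest bijections by combining the total integral with a translation-type inverse: using the surjectivity, for each $h\in H$ one picks a lift $\sum_i l_i(h)\otimes r_i(h)\in A\otimes_B A$ with $\sum_i\beta^{-1}(l_i(h))\,r_i(h)_{(0)}\otimes\alpha(r_i(h)_{(1)})=1_A\otimes h$, and the retraction $t$ supplies the data needed to check that the corresponding map $a\otimes h\mapsto \sum_i a\,l_i(h)\otimes r_i(h)$ is a two-sided inverse to $\can$.

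The main obstacle is verifying that the top row really is an equalizer. In Theorem \ref{theorem faithfull flat} this came from flatness of $A$ over $B$, which we no longer assume. Here Lemma \ref{lemma:relative}(3) takes over: the total integral $\varphi$ makes every $(N,\nu)\in\widetilde{\mathcal{H}}(M_A^H)$ injective as an $(H,\alpha)$-Hom-comodule, and the resulting colinear splittings survive the tensor product $-\otimes_B A$ so that exactness is preserved. Once the top row is known to be an equalizer and the two rightmost vertical maps are bijective, the short five lemma applied to the diagram forces $\epsilon_N$ to be an isomorphism. Combined with $\eta_M$ already being an iso, this yields that $(F,G)$ is a pair of inverse equivalences between $\widetilde{\mathcal{H}}(M_A^H)$ and $\widetilde{\mathcal{H}}(M_B)$.
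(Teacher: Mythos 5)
Your treatment of the unit via Lemma \ref{lemma:unit} matches the paper. The counit argument, however, has a genuine gap, and it sits exactly at the hard part of the theorem. You propose to replay the equalizer diagram from Theorem \ref{theorem faithfull flat}, which requires (i) that $\can_N$ and $\can_{N\otimes H}$ be \emph{bijective} and (ii) that the top row stay an equalizer after applying $-\otimes_B A$. For (i), surjectivity of $\can$ only gives surjectivity of $\can_X$; your proposed upgrade to bijectivity via a ``translation-type inverse'' is circular. Choosing lifts $\sum_i l_i(h)\otimes r_i(h)$ of $1_A\otimes h$ does produce a linear \emph{section} $\sigma$ of $\can$ (so $\can\,\sigma=\id$), but verifying $\sigma\,\can=\id$ is precisely the assertion that $\can$ is injective, which is what you are trying to prove; the retraction $t$ does not intervene to close this loop. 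In the classical Schneider theory the implication ``$\can$ surjective $+$ $A$ injective as a comodule $\Rightarrow$ $\can$ bijective'' is a \emph{consequence} of the category equivalence, not an input to it. For (ii), injectivity of $N$ as an $(H,\alpha)$-Hom-comodule yields an $H$-colinear retraction of $\rho_N$, but what you need is that the sequence $0\to N^{coH}\to N\to N\otimes H$ of right $(B,\beta|_B)$-Hom-modules remains exact after $-\otimes_B A$; a colinear splitting of $\rho_N$ does not hand you a $B$-linear splitting of that sequence, so ``the splittings survive the tensor product'' is unsubstantiated.

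The missing idea is the resolution argument. The paper first proves $\epsilon_{V\otimes A}$ is an isomorphism for every trivial relative Hom-Hopf module $V\otimes A$ (this uses only Lemma \ref{lemma:unit}), then builds, for arbitrary $N$, a split epimorphism $g=f\circ(N\otimes\can):N\otimes(A\otimes A)\to N$ --- here $f:N\otimes(A\otimes H)\to N$ is constructed from the colinear section $\lambda_A$ of $\rho_A$ supplied by the total integral, and $N\otimes\can$ is split surjective because $\can$ is surjective and $H$ is $k$-projective. Iterating gives a presentation $N_2\to N_1\to N\to 0$ by modules of the form $V\otimes A$ that is split exact as a sequence of Hom-comodules; taking coinvariants therefore preserves exactness, $-\otimes_B A$ is right exact, and the five lemma transfers the isomorphism from $\epsilon_{N_1},\epsilon_{N_2}$ to $\epsilon_N$. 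Note this argument never needs $\can$ to be injective nor $A$ to be $B$-flat; those facts come out afterwards (Theorem \ref{theorem faithfull flat} in reverse). Your plan, by contrast, presupposes both.
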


\begin{proof}
We have shown that the unit of the adjunction is an isomorphism in Lemma \ref{lemma:unit}.
We only need to show the counit $\epsilon_N$ is an isomorphism too, for all
$N\in \widetilde{\mathcal {H}}(M_A^H)$.

Firstly, we prove this in the case that $N=V\otimes A$, where $(V,\nu)$ is an arbitrary
object in $\widetilde{\mathcal {H}}(M_k)$, and the $(H,A)$-Hom-Hopf module structure
on $(N,\nu\otimes\beta)$ is induced by the structure on $(A,\beta)$, that is,
$$(v\otimes a)\cdot b=\nu(v)\otimes a\beta^{-1}(b), \ \
\rho_{V\otimes A}(v\otimes a)=(\nu^{-1}(v)\otimes a_{(0)})\otimes\alpha(a_{(1)}),$$
for any $v\in V, a,b \in A$. By Lemma \ref{lemma:unit}, we have
$$(V\otimes A)^{coH}\cong (V\otimes(B\otimes_B A))^{coH}\cong ((V\otimes B)\otimes_B A)^{coH} \cong V\otimes B.$$
Then we have a commutative diagram
$$
\xymatrix{
(V\otimes B)\otimes_B A \ar[r]^-{\cong}\ar[d]_-{\cong}
& V\otimes(B\otimes_B A)\ar[d]^-{\cong}
\\ (V\otimes A)^{coH}\otimes_B A \ar[r]_-{\epsilon_{V\otimes A}}
& V\otimes A.
}$$
And we see that $\epsilon_{V\otimes A}$ is an isomorphism.

By Lemma \ref{lemma:relative}, we know the coaction $\rho_A: A\rightarrow A\otimes H$
on $A$ has a section $\lambda_A: A\otimes H\rightarrow A$. And $\lambda_A$ is a right
$(H,\alpha)$-Hom-comodule map with the explicit form as follows
$$\lambda_A(a\otimes h)=\beta(a_{(0)})\varphi(S(a_{(1)}\alpha^{-1}(h))),$$
for any $a\in A, h\in H$.

It is not difficult to check that $N\otimes(A\otimes H)\in\widetilde{\mathcal {H}}(M_A^H)$ with the structures
$$(n\otimes(a\otimes h))\cdot b=\nu^{-1}(n)\otimes(a\beta^{-1}(b_{(0)})\otimes h\alpha^{-1}(b_{(1)})),$$
$$\rho_{N\otimes(A\otimes H)}(n\otimes(a\otimes h))
=(\nu^{-1}(n)\otimes(\beta^{-1}(a)\otimes h_1))\otimes\alpha^2(h_2),$$
for all $a\in A, h\in H, n\in N$.
Define the map in $\widetilde{\mathcal {H}}(M_k)$:
$$
f: N\otimes(A\otimes H)\rightarrow N,
n\otimes(a\otimes h)\mapsto \nu(n_{(0)})\cdot\lambda_A(a\otimes S\alpha^{-1}(n_{(1)})\alpha^{-1}(h)).
$$
Firstly, $f$ is surjective, since for any $n\in N$,
$$\begin{array}{rllr}
f(n_{(0)}\otimes(1_A\otimes\alpha^{-1}(n_{(1)})))
&= \nu(n_{(0)(0)})\cdot\lambda_A(1_A\otimes S\alpha^{-1}(n_{(0)(1)})\alpha^{-2}(n_{(1)}))
\\ &=n_{(0)}\cdot\lambda_A(1_A\otimes S\alpha^{-1}(n_{(1)1})\alpha^{-1}(n_{(1)2}))
\\ &=n_{(0)}\cdot\lambda_A(1_A\otimes\varepsilon(n_{(1)})1_H)
=\nu^{-1}(n)\cdot 1_A=n.
\end{array}$$
Next, $f$ is right $(H,\alpha)$-colinear by the following computation:
for all $h\in H, a\in A$ and $n\in N$,
$$\begin{array}{rllr}
& \rho_Af(n\otimes(a\otimes h))
\\ & \ \ =\rho_A(\nu(n_{(0)})\cdot\lambda_A(a\otimes S\alpha^{-1}(n_{(1)})\alpha^{-1}(h)))
\\ & \ \ =\nu(n_{(0)(0)})\cdot\lambda_A(a\otimes S\alpha^{-1}(n_{(1)})\alpha^{-1}(h))_{(0)}
          \otimes \alpha(n_{(0)(1)})\lambda_A(a\otimes S\alpha^{-1}(n_{(1)})\alpha^{-1}(h))_{(1)}
\\ & \ \ =\nu(n_{(0)(0)})\cdot\lambda_A(\beta^{-1}(a)\otimes(S\alpha^{-1}(n_{(1)})\alpha^{-1}(h))_1)
                                  \otimes\alpha(n_{(0)(1)})\alpha((S\alpha^{-1}(n_{(1)})\alpha^{-1}(h))_2)
\\ & \ \ =\nu(n_{(0)(0)})\cdot\lambda_A(\beta^{-1}(a)\otimes S\alpha^{-1}(n_{(1)2})\alpha^{-1}(h_1))\otimes\alpha(n_{(0)(1)})(S(n_{(1)1})h_2)
\\ & \ \ =n_{(0)}\cdot\lambda_A(\beta^{-1}(a)\otimes S(n_{(1)22})\alpha^{-1}(h_1))\otimes\alpha(n_{(1)1})(S\alpha(n_{(1)21})h_2)
\\ & \ \ =n_{(0)}\cdot\lambda_A(\beta^{-1}(a)\otimes S\alpha^{-1}(n_{(1)2})\alpha^{-1}(h_1))\otimes\alpha^2(n_{(1)11})(S\alpha(n_{(1)12})h_2)
\\ & \ \ =n_{(0)}\cdot\lambda_A(\beta^{-1}(a)\otimes S\alpha^{-1}(n_{(1)2})\alpha^{-1}(h_1))\otimes(\alpha(n_{(1)11})S\alpha(n_{(1)12}))\alpha(h_2)
\\ & \ \ =n_{(0)}\cdot\lambda_A(\beta^{-1}(a)\otimes S\alpha^{-1}(n_{(1)2})\alpha^{-1}(h_1))\otimes\varepsilon(n_{(1)1})1_H\alpha(h_2)
\\ & \ \ =n_{(0)}\cdot\lambda_A(\beta^{-1}(a)\otimes S\alpha^{-2}(n_{(1)})\alpha^{-1}(h_1))\otimes\alpha^2(h_2)
\\ & \ \ =(f\otimes H)((\nu^{-1}(n)\otimes(\beta^{-1}(a)\otimes h_1))\otimes \alpha^2(h_2))
\\ & \ \ =(f\otimes H)\rho_{N\otimes(A\otimes H)}(n\otimes(a\otimes h)),
\end{array}$$
where the second step follows by the compatibility \eqref{eq2.2}, and the third step
holds since $\lambda_A$ is right $(H,\alpha)$-colinear and the coaction on
$(A\otimes H,\beta\otimes\alpha)$ is given by
$\rho(a\otimes h)=(\beta^{-1}(a)\otimes h_1)\otimes \alpha(h_2)$.
We conclude that $f$ a split epimorphism in $\widetilde{\mathcal {H}}(M^H)$ finally.

Since $H$ is projective as a $k$-module, $A\otimes H$ is projective as a
left $(A,\beta)$-Hom-module. The map ${\rm \can}: A\otimes A\rightarrow A\otimes H$
is a left $(A,\beta)$-linear epimorphism because
$$\begin{array}{rllr}
{\rm \can}(c\cdot (a\otimes b))= & {\rm \can}(\beta^{-1}(c)a\otimes\beta(b))
=(\beta^{-2}(c)\beta^{-1}(a))\beta(b_{(0)})\otimes\alpha^2(b_{(1)})
\\ &=\beta^{-1}(c)(\beta^{-1}(a)b_{(0)})\otimes\alpha^2(b_{(1)})
=c\cdot{\rm \can}(a\otimes b).
\end{array}$$
Thus ${\rm \can}$ has an $(A,\beta)$-linear splitting,
and a fortiori splitting in $\widetilde{\mathcal {H}}(M_k)$.

It is easy to check that $N\otimes(A\otimes A)\in \widetilde{\mathcal {H}}(M_A^H)$
with the following structures
$$(n\otimes(a\otimes a'))\cdot b=\nu(n)\otimes(\alpha(a)\otimes a'\beta^{-2}(b)),$$
and
$$\rho_{N\otimes(A\otimes A)}=(\nu^{-1}(n)\otimes(\beta^{-1}(a)\otimes b_{(0)}))\otimes\alpha^{2}(b_{(1)}).$$
Then
$$N\otimes{\rm \can}:N\otimes(A\otimes A)\rightarrow N\otimes(A\otimes H)$$
is a morphism in $\widetilde{\mathcal {H}}(M_A^H)$, which is surjective and
split in $\widetilde{\mathcal {H}}(M_k)$. Therefore,
$$g=f(N\otimes{\rm \can}): N\otimes(A\otimes A)\rightarrow N$$
is surjective and split in $\widetilde{\mathcal {H}}(M_k)$.

Set $N'=ker(g)$. Then there is an exact sequence
\begin{equation}\label{exact sequene}
\xymatrix{
0 \ar[r]
& N' \ar[r]
& N\otimes(A\otimes A) \ar[r]^-{g}
& N \ar[r]
& 0
}
\end{equation}
in $\widetilde{\mathcal {H}}(M_A^H)$ which is split as a sequence in $\widetilde{\mathcal {H}}(M_k)$.
\eqref{exact sequene} is also a split exact sequence of $(H,\alpha)$-Hom-comodule
by Lemma \ref{lemma:relative}.

Repeating the resolution with $N'$ instead of $N$, we obtain another
exact sequence in $\widetilde{\mathcal {H}}(M_A^H)$
\begin{equation}\label{exact1}
\xymatrix{
0 \ar[r]
& N'' \ar[r]
& N'\otimes(A\otimes A) \ar[r]^-{g'}
& N' \ar[r]
& 0
}
\end{equation}
which is split in $\widetilde{\mathcal {H}}(M^H)$.
Now denote $N_1=N\otimes(A\otimes A), N_2=N'\otimes(A\otimes A)$.
Combining the two sequences \eqref{exact sequene} and \eqref{exact1},
the following exact sequence
$$
\xymatrix{
N_2 \ar[r]^-{g'}
& N_1 \ar[r]^-{g}
& N \ar[r]
& 0
}
$$
is obtained in $\widetilde{\mathcal {H}}(M^H)$.
Since \eqref{exact sequene} and \eqref{exact1} are both split exact in
$\widetilde{\mathcal {H}}(M^H)$, they stay exact after we take
$(H,\alpha)$-coinvariants and combine them. Then we have an exact
sequence in $\widetilde{\mathcal {H}}(M_B)$
$$
\xymatrix{
N_2^{coH} \ar[r]
& N_1^{coH} \ar[r]
& N^{coH} \ar[r]
& 0
}
$$
The tensor functors are always right exact,
so finally we obtain an exact sequence
$$
\xymatrix{
N_2^{coH}\otimes_B A \ar[r]
& N_1^{coH}\otimes_B A \ar[r]
& N^{coH}\otimes_B A \ar[r]
& 0
}
$$
in $\widetilde{\mathcal {H}}(M_A^H)$.
Thus, there is a commutative diagram
$$
\xymatrix{
N_2^{coH}\otimes_B A \ar[r]\ar[d]_-{\epsilon_{N_2}}
& N_1^{coH}\otimes_B A \ar[r]\ar[d]_-{\epsilon_{N_1}}
& N^{coH}\otimes_B A \ar[r]\ar[d]_-{\epsilon_N}
& 0
\\
N_2 \ar[r]_-{g'}
& N_1 \ar[r]_-{g}
& N \ar[r]
& 0
}
$$
where both the bottom and the top lines are exact sequences in
$\widetilde{\mathcal {H}}(M_A^H)$. Since $N_1=N\otimes(A\otimes A)\cong(N\otimes A)\otimes A$
and $N_2=N'\otimes(A\otimes A)\cong(N'\otimes A)\otimes A$ are $(H,A)$-Hom-Hopf modules
of the form $V\otimes A$, where $V$ is an arbitrary object in $\widetilde{\mathcal {H}}(M_k)$,
we see that $\epsilon_{N_1}$ and $\epsilon_{N_2}$ are isomorphisms. Thus
$\epsilon_{N}$ is an isomorphism too.
\end{proof}

%
%
%
%

\begin{lemma}  \label{lemma:coflat}
Let $(H,\alpha)$ be a monoidal Hom-Hopf algebra. We assume that
there is an isomorphism $M\Box_H Q\cong\widetilde{\mathcal{H}}(Com_H(Q^*,M))$,
for any right $(H,\alpha)$-Hom-comodule $(M,\mu)$ and any finite-dimensional left
$(H,\alpha)$-Hom-comodule $(Q,\kappa)$. Then $(M,\mu)$ is right $(H,\alpha)$-coflat
if and only if it is an injective object in $\widetilde{\mathcal {H}}(M^H)$.
\end{lemma}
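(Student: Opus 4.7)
The plan is to use the hypothesised natural isomorphism $M\Box_H Q \cong \widetilde{\mathcal{H}}(Com_H(Q^*,M))$ as a bridge between the cotensor functor (whose right exactness encodes coflatness of $M$) and the hom functor $\widetilde{\mathcal{H}}(Com_H(-,M))$ (whose right exactness encodes injectivity of $M$ in $\widetilde{\mathcal{H}}(\mathcal{M}^H)$). The translation is made possible by the left duality $Q \leftrightarrow Q^*$ between finite-dimensional left and right $(H,\alpha)$-Hom-comodules recorded in Section~1. Both functors are automatically left exact (the cotensor product as an equalizer, the hom functor as a representable functor), so in each direction it suffices to show that a monomorphism on one side is carried to an epimorphism on the other.

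First I would reduce the exactness tests to the finite-dimensional case on both sides. The Hom-analogue of the fundamental theorem of coalgebras should hold here: for any element $m$ of a Hom-comodule, the counit axiom together with the Hom-coassociativity identity $\eqref{eq1.4}$ and the compatibility $\eqref{eq1.5}$ force $m$ to lie in a finite-dimensional sub-Hom-comodule, so every Hom-comodule is the filtered union of its finite-dimensional sub-Hom-comodules. Since $M\Box_H(-)$ commutes with filtered colimits and every morphism out of a finite-dimensional Hom-comodule factors through a finite-dimensional sub-Hom-comodule, both coflatness and injectivity of $M$ can be checked on finite-dimensional exact sequences. On finite-dimensional Hom-comodules the left duality $(-)^*$ is an exact, involutive contravariant functor that interchanges left and right Hom-comodules and swaps monomorphisms with epimorphisms, and satisfies $N^{**}\cong N$.

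For the direction $(\Rightarrow)$, assume $M$ is right coflat and take a monomorphism $\iota:N' \hookrightarrow N$ of finite-dimensional right Hom-comodules. Dualizing gives an epimorphism $\iota^*:N^* \twoheadrightarrow N'^*$ of finite-dimensional left Hom-comodules; coflatness of $M$ makes $M\Box_H N^* \twoheadrightarrow M\Box_H N'^*$ surjective; and the natural isomorphism applied at $Q=N^*$ and $Q=N'^*$ (using $N^{**}\cong N$ and $N'^{**}\cong N'$) transports this to a surjection $\widetilde{\mathcal{H}}(Com_H(N,M)) \twoheadrightarrow \widetilde{\mathcal{H}}(Com_H(N',M))$, which via the finite-dimensional reduction shows that $M$ is injective. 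The reverse implication is strictly symmetric: an epimorphism $Q \twoheadrightarrow Q''$ of finite-dimensional left Hom-comodules dualizes to a monomorphism $Q''^* \hookrightarrow Q^*$ of right ones, injectivity of $M$ produces a surjection $\widetilde{\mathcal{H}}(Com_H(Q^*,M)) \twoheadrightarrow \widetilde{\mathcal{H}}(Com_H(Q''^*,M))$, and the isomorphism converts this back into a surjection $M\Box_H Q \twoheadrightarrow M\Box_H Q''$, certifying coflatness of $M$.

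The main obstacle is the naturality of the given isomorphism in the variable $Q$, together with its compatibility with the duality $Q \mapsto Q^*$: one must know that the map induced on cotensor products by a morphism $Q \to Q'$ corresponds under the isomorphism to the precomposition map $\widetilde{\mathcal{H}}(Com_H(Q'^*,M)) \to \widetilde{\mathcal{H}}(Com_H(Q^*,M))$. Establishing the Hom-analogue of the fundamental theorem of coalgebras used in the reduction step is the other point that deserves care. Once these two ingredients are in hand, the whole argument is a formal chain of translations through the isomorphism and the finite-dimensional duality; no further deep input is required.
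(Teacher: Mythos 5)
Your ``coflat $\Rightarrow$ injective'' direction is essentially the paper's: dualize a finite-dimensional right Hom-comodule $(N,\nu)$ to a left one, apply the hypothesized isomorphism at $Q=N^*$ together with $N^{**}\cong N$ to get $M\Box_H N^*\cong\widetilde{\mathcal{H}}(Com_H(N,M))$, conclude that $M$ is relatively injective with respect to finite-dimensional comodules, and then pass to full injectivity by the Hom-analogue of Theorem 2.4.17 of Dascalescu--Nastasescu--Raianu, exactly as the paper does. Where you genuinely diverge is the other direction. The paper does not route ``injective $\Rightarrow$ coflat'' through the isomorphism at all: injectivity of $(M,\mu)$ yields a right $(H,\alpha)$-colinear retraction $\lambda_M:M\otimes H\to M$ of $\rho_M$, and for any surjection $f:(N,\nu)\to(W,\omega)$ of left Hom-comodules and any $m\otimes w\in M\Box_H W$ with $f(n)=w$, the explicit element $\lambda_M(\mu^{-1}(m)\otimes n_{(-1)})\otimes\nu(n_{(0)})$ is checked to lie in $M\Box_H N$ and to map to $m\otimes w$. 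That argument is more elementary than your dualization scheme: it handles arbitrary (not necessarily finite-dimensional) comodules in one step, needs no naturality of the hypothesized isomorphism, and needs no reduction to finite-dimensional subobjects.

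This matters because the two ingredients you flag as ``the main obstacle'' are not cosmetic in your version of that direction. First, the lemma only asserts an isomorphism, not a natural one; without naturality in $Q$ the bijection carries no information about induced maps, so your chain of surjections does not transport. (The paper tacitly reads the hypothesis as natural as well, but only in the direction where this is unavoidable.) Second, and more seriously, your reduction of coflatness to finite-dimensional surjections rests on a Hom-analogue of the fundamental theorem of comodules, and in $\widetilde{\mathcal{H}}(M^H)$ a subobject must be stable under both $\mu$ and $\mu^{-1}$: the classical coefficient-space construction produces a finite-dimensional subcomodule $V$ containing a twist of $m$, but $\rho_M(\mu(m))=\mu(m_{(0)})\otimes\alpha(m_{(1)})$ shows that the coefficient space of $\mu(m)$ is $\mu(V)$, so the $\mu$-stable hull $\sum_{k\in\mathbb{Z}}\mu^k(V)$ is not obviously finite-dimensional. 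Until that is settled, your ``injective $\Rightarrow$ coflat'' argument has a real gap which the paper's explicit splitting construction avoids; I would adopt the direct construction for that direction and keep your duality argument only for the converse.
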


\begin{proof}
If $(M,\mu)$ is injective in $\widetilde{\mathcal {H}}(M^H)$, then there is an
$(H,\alpha)$-Hom-colinear map
$$\lambda_M:M\otimes H\rightarrow M$$ splitting $\rho_M$, that is,
$\lambda_M\rho_M=\id_M$. Let $f:(N,\nu)\rightarrow (W,\omega)$ be surjective in
$\widetilde{\mathcal {H}}(M^H)$ and take $m\otimes w\in M\Box_H W$. Since $f$
is surjective, we can find an element $n\in N$ such that $f(n)=w$. To show
$(M,\mu)$ is right $(H,\alpha)$-coflat, we only need to show $M\Box_H f:M\Box_H N\rightarrow M\Box_H W$
is surjective. In fact, $m\otimes w\in M\Box_H W$ implies that
$$m\otimes w=\lambda_M(m_{(0)}\otimes m_{(-1)})\otimes f(n)
=(M\otimes f)(\lambda_M(\mu^{-1}(m)\otimes n_{(-1)})\otimes\nu(n_{(0)})).$$
Using the fact that $\lambda_M$ is $(H,\alpha)$-colinear, we have
$$\begin{array}{rllr}
 & (\rho_M\otimes M)(\lambda_M(\mu^{-1}(m)\otimes n_{(-1)})\otimes\nu(n_{(0)}))
\\ &=(\lambda_M(\mu^{-2}(m)\otimes n_{(-1)1})\otimes\alpha(n_{(-1)2}))\otimes\nu(n_{(0)})
\\ &=(\lambda_M(\mu^{-2}(m)\otimes \alpha^{-1}(n_{(-1)}))\otimes\alpha(n_{(0)(-1)}))\otimes\nu^2(n_{(0)(0)})
\\ &=\widetilde{a}^{-1}(M\otimes\rho_N)(\lambda_M(\mu^{-1}(m)\otimes n_{(-1)})\otimes\nu(n_{(0)})).
\end{array}$$
So $\lambda_M(\mu^{-1}(m)\otimes n_{(-1)})\otimes\nu(n_{(0)})\in M\Box_H N$ and this shows
that $M\Box_H f$ is surjective.

Conversely, if $(N,\mu,\rho_N)$ is a finite dimensional right $(H,\alpha)$-Hom-comodule, then
the natural morphism
$$
\theta: H\otimes N^*\rightarrow Hom(N,H), \theta(h\otimes n^*)(n)=n^*(\nu(m))\alpha(h)
$$
is a linear isomorphism. $(N^*,(\nu^*)^{-1})$ is a left $(H,\alpha)$-Hom-comodule via the coaction
$$\rho_{N^*}: N^*\rightarrow H\otimes N^*, \rho_{N^*}(n^*)=\theta^{-1}((n^*\otimes H)\rho_N).$$
Then
$$M\Box_H N^*\cong \widetilde{\mathcal {H}}(Com_H(N^{**},M))\cong \widetilde{\mathcal {H}}(Com_H(N,M)),$$
by the assumption.
Since $(M,\mu)$ is coflat we obtain that $(M,\mu)$ is $(N,\nu)$-injective
which means that for any Hom-subcomodule $(N',\nu)$ of $(N,\nu)$, and for any
$f\in \widetilde{\mathcal {H}}(Com_H(N',M))$, there exists $g\in \widetilde{\mathcal {H}}(Com_H(N,M))$
such that $g|_{(N',\nu)}=f$.
Then  $(M,\mu)$ is also an injective object in $\widetilde{\mathcal {H}}(M^H)$, where the
proof is similar to the non-Hom-case in Theorem 2.4.17 in \cite{S. Dascalescu}.
\end{proof}

\begin{theorem}
Let $(H,\alpha)$ be a monoidal Hom-Hopf algebra with a bijective antipode,
$(A,\beta)$ a right $(H,\alpha)$-Hom-comodule algebra. We assume that
there is an isomorphism $M\Box_H Q\cong\widetilde{\mathcal{H}}(Com_H(Q^*,M))$,
for any right $(H,\alpha)$-Hom-comodule $(M,\mu)$ and any finite-dimensional left
$(H,\alpha)$-Hom-comodule $(Q,\kappa)$. Then the following assertions
are equivalent.

1) There exits a total integral $\varphi:(H,\alpha)\rightarrow (A,\beta)$ and the map ${\rm \can}$ is surjective;

2) The functors $F$ and $G$ are a pair of inverse equivalence between the categories
$\widetilde{\mathcal {H}}(M_A^H)$ and $\widetilde{\mathcal {H}}(M_B)$;

3) The functors $F'$ and $G'$ are a pair of inverse equivalence between the categories
$\widetilde{\mathcal {H}}(_A M^H)$ and $\widetilde{\mathcal {H}}(_B M)$;

4) $A$ is a Hopf-Galois extension of $B$, and is faithfull flat as a left $(B,\beta)$-Hom-module;

5) $A$ is a Hopf-Galois extension of $B$, and is faithfull flat as a right $(B,\beta)$-Hom-module;
\end{theorem}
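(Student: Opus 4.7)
The strategy is to close the equivalences cyclically through condition $1)$. I will prove $1)\Rightarrow 2)\Leftrightarrow 4)\Rightarrow 1)$ and the parallel chain $1)\Rightarrow 3)\Leftrightarrow 5)\Rightarrow 1)$; together these deliver all five equivalences.

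For $1)\Rightarrow 2)$, Theorem \ref{theorem total integral} applies verbatim, since its hypotheses are exactly those of $1)$. For $1)\Rightarrow 3)$, recall from the proposition of Section~3 relating $\can$ and $\can'$ that surjectivity of $\can$ is equivalent to surjectivity of $\can'$; then the left-handed analogue of Theorem \ref{theorem total integral}, proved by the same resolution argument carried out in $\widetilde{\mathcal {H}}(_A M^H)$ (or obtained by transporting the right-handed version to the opposite Hom-comodule algebra structure), yields $3)$. The equivalences $2)\Leftrightarrow 4)$ and $3)\Leftrightarrow 5)$ are, respectively, Theorem \ref{theorem faithfull flat} and its left-right mirror, the latter proved by the same argument with the roles of left and right modules exchanged.

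The decisive step is $4)\Rightarrow 1)$. Assume $4)$. By Theorem \ref{theorem faithfull flat}, $(F,G)$ is a pair of inverse equivalences, and in particular the right adjoint $G=(-)^{coH}:\widetilde{\mathcal {H}}(M_A^H)\to \widetilde{\mathcal {H}}(M_B)$ is exact. The preceding proposition characterizing coflatness of a Hom-comodule algebra (exactness of $G$ is equivalent to right $(H,\alpha)$-coflatness of $(A,\beta)$) then shows that $(A,\beta)$ is right $(H,\alpha)$-coflat. Invoking the standing hypothesis $M\Box_H Q\cong\widetilde{\mathcal{H}}(Com_H(Q^*,M))$, Lemma \ref{lemma:coflat} upgrades coflatness to injectivity of $(A,\beta)$ in $\widetilde{\mathcal {H}}(\mathcal {M}^H)$. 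Lemma \ref{lemma:relative} then produces a total integral $\varphi:(H,\alpha)\rightarrow (A,\beta)$, and the Hopf-Galois assumption in $4)$ already makes $\can$ an isomorphism, hence surjective. The implication $5)\Rightarrow 1)$ is obtained identically, using that $G'$ is exact (by the mirror of Theorem \ref{theorem faithfull flat}) and then applying the same coflatness proposition (this time its implication $(3)\Rightarrow (1)$) before feeding into Lemmas \ref{lemma:coflat} and \ref{lemma:relative} as before.

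The principal obstacle is the cycle-closing step $4)\Rightarrow 1)$, namely extracting a total integral from the mere existence of the equivalence. No direct construction of $\varphi$ from the unit or counit of $(F,G)$ is available; the bridge is that exactness of $G$ forces $(H,\alpha)$-coflatness of $(A,\beta)$, after which the standing isomorphism hypothesis licenses Lemma \ref{lemma:coflat} to promote coflatness to injectivity, and finally Lemma \ref{lemma:relative} converts injectivity into the existence of the integral. The technical assumption $M\Box_H Q\cong\widetilde{\mathcal{H}}(Com_H(Q^*,M))$ is used essentially at this one place.
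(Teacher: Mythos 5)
Your proof is correct, and its outer skeleton ($1)\Rightarrow 2)\Leftrightarrow 4)\Rightarrow 1)$ plus mirror statements) is the same as the paper's, which in fact only treats $1)$, $2)$, $4)$ explicitly and leaves $3)$, $5)$ to symmetry as you do. The genuine difference is in the cycle-closing step $4)\Rightarrow 1)$. You establish right $(H,\alpha)$-coflatness of $(A,\beta)$ abstractly: the equivalence $(F,G)$ forces $G=(-)^{coH}$ to be exact, and then the proposition characterizing coflatness (its implication $2)\Rightarrow 1)$, proved in the paper by the ``op''-trick reduction to $3)\Rightarrow 1)$) yields coflatness. The paper instead proves coflatness directly from the hypotheses of $4)$: it builds the chain of left $(B,\beta)$-Hom-module isomorphisms
$(A\Box_H V)\otimes_B A\cong (A\otimes_B A)\Box_H V\cong (A\otimes H)\Box_H V\cong A\otimes V$
(using flatness of $A$ over $B$ and bijectivity of ${\rm \can'}$), applies $-\otimes V$ to a short exact sequence of left Hom-comodules, and descends exactness of $A\Box_H(-)$ by faithful flatness. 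From the point where coflatness is in hand, the two arguments coincide: Lemma \ref{lemma:coflat} (where the standing hypothesis $M\Box_H Q\cong\widetilde{\mathcal{H}}(Com_H(Q^*,M))$ is used) upgrades coflatness to injectivity in $\widetilde{\mathcal {H}}(\mathcal{M}^H)$, Lemma \ref{lemma:relative} produces the total integral, and surjectivity of ${\rm \can}$ is immediate from the Galois assumption. Your route is shorter and reuses an already-stated proposition, at the cost of leaning on two facts you should at least flag: that the Hom-module categories involved are abelian so that an equivalence is automatically exact, and the correctness of the lightly justified ``op''-trick in the proposition's $2)\Rightarrow 1)$. The paper's direct computation avoids both dependencies and makes visible exactly where faithful flatness and the Galois isomorphism enter.
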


\begin{proof}
1)$\Rightarrow$ 2) follows by Theorem \ref{theorem total integral}, and 2)$\Leftrightarrow$ 4)
follows by Theorem \ref{theorem faithfull flat}. Now we only need to show 4)$\Rightarrow$ 1).
Suppose that $A$ is a Hopf-Galois extension of $B$, and is faithfull flat as a left $(B,\beta)$-Hom-module.
In order to show there is a total integral, we only need to show that $(A,\beta)$ is an injective object
in $\widetilde{\mathcal {H}}(M^H)$ by Lemma \ref{lemma:relative}. Equivalently, we have to show
that $(A,\beta)$ is right $(H,\alpha)$-coflat by Lemma \ref{lemma:coflat}.

For any $(V,\nu)\in\widetilde{\mathcal {H}}(^H M)$, $A\Box_HV$ is a right $(B,\beta|_B)$-Hom-module
via $(a\otimes v)\cdot b=a\beta^{-1}(b)\otimes\nu(v)$. Define a map:
$$\varpi: (A\Box_H V)\otimes_B A\rightarrow (A\otimes_B A)\Box_H V;
(a\otimes v)\otimes a'\mapsto (a\otimes\beta^{-1}(a'))\otimes\nu(v),$$
where the right $(H,\alpha)$-Hom-comodule structure on $A\otimes_B A$ is given by
$\rho_{A\otimes_B A}(a\otimes a')=(a_{(0)}\otimes\beta^{-1}(a'))\otimes\alpha(a_{(1)})$.
Since $A$ is flat as left $B$-Hom-modules, $\varpi$ is an isomorphism as left $B$-Hom-modules.
${\rm \can}$ is bijective, so ${\rm \can'}$ is an isomorphism. Then we have the following
sequence of left $B$-Hom-module isomorphisms:
$$
(A\Box_H V)\otimes_B A\cong (A\otimes_B A)\Box_H V \cong (A\otimes H)\Box_H V \cong A\otimes(H\Box_H V) \cong A\otimes V.
$$
For any exact sequence
\begin{equation}
\xymatrix
{ 0 \ar[r] &  U \ar[r]  & V \ar[r]   &   W   \ar[r]  &  0  }
\end{equation}
in $\widetilde{\mathcal {H}}(^H M)$, the sequence
\begin{equation}
\xymatrix
{0 \ar[r]  & A\otimes U \ar[r]  &  A\otimes V \ar[r]  &  A\otimes W  \ar[r]  &  0 }
\end{equation}
is also exact in $\widetilde{\mathcal {H}}(_k M)$ as $k$ is a field.
Hence, we have the following exact sequence
\begin{equation}
\xymatrix
{0 \ar[r]  &  (A\Box_H U)\otimes_B A  \ar[r]  &  (A\Box_H V)\otimes_B A \ar[r]  &  (A\Box_H W)\otimes_B A  \ar[r]  & 0 }.
\end{equation}
Since $A$ is faithfully flat as a left $B$-Hom-module, the following exact sequence
\begin{equation}
\xymatrix
{0 \ar[r]  &  A\Box_H U  \ar[r]  &  A\Box_H V  \ar[r]  &  A\Box_H W  \ar[r]  & 0 }
\end{equation}
is obtained at last, and this implies that $A$ is right $(H,\alpha)$-coflat.
\end{proof}


\begin{thebibliography}{99}

\bibitem{A. Gohr2010}A. Gohr, \emph{On Hom-algebras with surjective
twisting}, J. Algebra {\bf 324}(2010), 1483--1491.


\bibitem{A. Makhlouf2008}A. Makhlouf, S.D. Silvestrov, \emph{Hom-algebras
structures}, J. Gen. Lie theory Appl. {\bf 2}(2008), 51-64.

\bibitem{A. Makhlouf20101}A. Makhlouf, S.D. Silvestrov, \emph{Hom-algebras and
Hom-coalgebras}, J. Algebra Appl. {\bf 9}(2010), 553-589.

\bibitem{A. Makhlouf20102}A. Makhlouf, \emph{Hom-Alternative algebras and
Hom-Jordan algebras}, Int. Electron. J. Algebra {\bf 8}(2010),
177--190.

\bibitem{A.Makhlouf2007}A. Makhlouf, S.D. Silvestrov, \emph{Hom-Lie
admissible Hom-coalgebras and Hom-Hopf algebras}, in Generalized Lie
440 Theory in Mathematics, Physics and Beyond, edited by S. Silvestrov,
E. Paal, V. Abramov, and A. Stolin (Springer-Verlag, 441 Berlin, 2009),
pp. 189--206; Preprints in Mathematical Sciences, Lund University,
Centre for Mathematical Sciences, 442 Centrum Scientiarum Mathematicarum,
Vol. 25, 2007, LUTFMA-5091-2007 and in e-print arXiv:0709.2413 [math.RA].

\bibitem{D. Larsson2005}D. Larsson, S.D. Silvestrov, \emph{Quasi-Hom-Lie
algebras, central extensions and 2-cocycle-like identities}, J.
Algebra {\bf 288}(2005), 321--344.


\bibitem{D. Larsson2007} D. Larsson, S.D.
Silvestrov, Quasi-deformations of $sl_2(F)$ using twisted
derivations, \emph{Commum. Algebra} {\bf 35} (2007), 4303--4318.


\bibitem{J. Arnlind}J. Arnlind, A. Makhlouf and S.D. Silvestrov,
\emph{Ternary Hom-Nambu-Lie algebras induced by Hom-Lie algebras},
J. Math. Phys., {\bf 51}(1)(2010), 1--11.


\bibitem{D. Yau2008}D. Yau, \emph{Enveloping algebra of Hom-Lie algebras},
J. Gen. Lie Theory Appl. {\bf 2}(2)(2008), 95--108.

\bibitem{D. Yau20091}D. Yau, \emph{ Hom-algebras and homology}, J. Lie
Theory {\bf 19}(2009), 409--421 .

\bibitem{D. Yau20092}D. Yau, \emph{The Hom-Yang-Baxter equation, Hom-Lie
algebras, and quasi-triangular bialgebras}, J. Phys. A, {\bf 42}, 165202, (2009).


\bibitem{D. Yau20094}D. Yau, \emph{Hom-quantum group I: quasi-triangular
Hom-bialgebras}, J. Phys. A {\bf 45}, 065203, (2012).

\bibitem{F. Ammar2010}F. Ammar, A. Makhlouf, \emph{Hom-Lie
superalgebras and Hom-Lie admissible superalgebras}, J. Algebra {\bf
324}(2010), 1513--1528.

\bibitem{J. Arnlind2010}J. Arnlind, A. Makhlouf and S. Silvestrov
\emph{Ternary Hom-Nambu-Lie algebras induced by Hom-Lie algebras},
J. Math. Phys. {\bf 51}, 043515, (2010).

\bibitem{J. T. Hartwig2006}J. T. Hartwig, D. Larsson and S. D. Silvestrov,
\emph{Deformation of Lie algebras using $\sigma$-derivations}, J.
Algebra {\bf 295}(2006), 314--361.

\bibitem{Q. Q. Jin2008}Q. Q. Jin, X. C. Li, \emph{Hom-Lie algebra
structures on semi-simple Lie algebras}, J. Algebra {\bf 319}(2008),
1398--1408.


\bibitem{S. Caenepeel2009}S. Caenepeel, I. Goyvaerts. \emph{Monoidal Hom-Hopf
algebras}, Commun. Algebra {\bf 39}(2011), 2216--2240.


\bibitem{S. Dascalescu}S. Dascalescu, C. Nastasescu and S. Raianu,
Hopf Algebras: An Introduction, Monographs and Textbooks in Pure
and Applied Mathematics 235, Marcel-Dekker, New-York, 2001.


\bibitem{Y. Y. Chen2010}Y. Y. Chen, Y. Wang and L. Y. Zhang,
\emph{The Construction of Hom-Lie Bialgebras},
J. Lie Theory {\bf 20}(2010), 767--783.


\bibitem{Y. Y. Chen2013}Y. Y. Chen, Z. W. Wang and L. Y. Zhang,
\emph{The integral of monoidal Hom-Hopf algebras},
J. Math. Phys., {\bf 45}, 073515, (2013).


\bibitem{Y. Doi}Y. Doi and M. Takeuchi, Hopf-Galois extensions of algebras,
the Miyashita-Ulbrich action and Azumaya algebras
\emph{J. Algebra} {\bf 121}(1989), 488--516.


\bibitem{Y. Sheng} Y. Sheng and D. Chen,
\emph{Hom-Lie 2-algebras}, J. Algebra, {\bf 376}(2013), 174--195.



\bibitem{Y. Y. Chen2014}Y. Y. Chen and L. Y. Zhang,
\emph{On the Structure Theorem and the Maschke-type Theorem of Relative Hom-Hopf
Modules}, e-Print.





\end{thebibliography}
\end{document}